\font \bigrm=cmr10 at 21truept
\newtheorem{proposition}{Proposition}[section]
\newtheorem{theorem}[proposition]{Theorem}
\newtheorem{corollary}[proposition]{Corollary}
\newtheorem{lemma}[proposition]{Lemma}
\theoremstyle{definition}
\newtheorem{definition}[proposition]{Definition}
\newtheorem{example}[proposition]{Example}
\newtheorem{free text}[proposition]{}
\newcommand{\fg}{\mathfrak {g}}
\newcommand{\fgl} {\mathfrak {gl}}
\newcommand{\ft}{\mathfrak {t}}
\newcommand{\fl}{\mathfrak {l}}
\newcommand{\fu}{\mathfrak {u}}
\newcommand{\gz}{\mathfrak {g}_{\bar{0}}}
\newcommand{\go}{\mathfrak {g}_{\bar{1}}}
\newcommand{\ch}{\mathcal H}
\newcommand{\Lie}{\mbox{\rm Lie}}
\newcommand{\sk}{\bigskip}
\newcommand{\lra}{\longrightarrow}
\newcommand{\pa}{\partial}
   \newcommand{\cc}{{\mathcal C}}
\newcommand{\de}{\delta}
\newcommand{\ep}{\varepsilon}
\newcommand{\la}{\lambda}
\newcommand{\om}{\omega}
\newcommand{\sh}{\sharp}
\newcommand{\im}{{\rm Im}}
\newcommand{\bl}{\mathbb {L}}
\newcommand{\bz}{\mathbb {Z}}
\newcommand{\bc}{\mathbb {C}}
\newcommand{\bk}{\mathbb {K}}
\newcommand{\br}{\mathbb {R}}
\newcommand{\N}{{\mathbb {N}}}
\newcommand{\C}{{\mathbb {C}}}
\newcommand{\R}{{\mathbb {R}}}
\newcommand{\cD} {\mathcal{D}}
\newcommand{\cH} {\mathcal{H}}
\newcommand{\cO} {\mathcal{O}}
\newcommand{\id}{{\rm id}}
\newcommand{\wh}{\widehat}
\newcommand{\blz}{\bl_{\bar{0}}}
\newcommand{\Om}{\Omega}
\newcommand{\we}{\wedge}
\newcommand{\beq}{\begin{equation}}
\newcommand{\eeq}{\end{equation}}
\newcommand{\zero}{{\bar{0}}}
\newcommand{\one}{{\bar{1}}}
\newcommand{\ze}{\zeta}
\newcommand{\bere}{\mathcal{B}}
\newcommand{\F}{2F}
\numberwithin{equation}{section}
\theoremstyle{plain}
\begin{document}

%%%%%%%%% TITLE  AND OTHERS %%%%%%%%%%%%%

{\ }
 \vskip-43pt

   \centerline{\bigrm SUPER K\"AHLER STRUCTURES}
\vskip15pt
   \centerline{\bigrm ON THE COMPLEX ABELIAN}
\vskip15pt
   \centerline{\bigrm LIE SUPERGROUPS}

\vskip25pt

   \centerline{Meng-Kiat CHUAH$\,{}^\flat$ \ ,  \ \ Fabio GAVARINI$\,{}^\sharp$}

\vskip21pt

   \centerline{$ {}^\flat $  Department of Mathematics, National Tsing Hua University}
   \centerline{Hsinchu 300, TAIWAN   ---   {\tt chuah@math.nthu.edu.tw}}
 \vskip9pt
   \centerline{$ {}^\sharp $  Dipartimento di Matematica, Universit\`a degli Studi di Roma ``Tor Vergata''}
   \centerline{Via della ricerca scientifica 1,  I\,-00133 Roma, ITALY   ---   {\tt gavarini@mat.uniroma2.it}}

\vskip39pt

   {\bf Abstract:}  Let $G$ be a real Abelian Lie supergroup,  let $M$ be its complexification.
   We classify the $G$-invariant super K\"ahler forms on $M$.
   For the super K\"ahler forms with Hamiltonian actions,
   we extend the scheme of geometric quantization
   to the super setting and construct unitary $G$-representations.
We show that the irreducible representations that occur in these unitary representations
are governed by the image of the moment maps of super K\"ahler forms.
As an application, we construct a Gelfand model of $G$,
namely a unitary $G$-representation in which every unitary irreducible representation
occurs exactly once.

\vskip15pt

\noindent
 \emph{2020 Mathematics Subject Classification:}\;  22E45, 32A36, 53C55, 53D50, 58A50
 \vskip3pt

   \emph{Keywords:} Lie supergroups, super K\"ahler forms, geometric quantization, unitary representations, moment maps.

{\ } \vskip-43pt

\tableofcontents

% \newpage
\bigskip
   \bigskip

  %%%%%%%%%%%%%%%%%%%%%%%%%%%
%%%%%%%%%   PAPER   %%%%%%%%%%%

%%%%%%%%%%%% INTRODUCTION %%%%%%

\section{Introduction}
  \label{sec: intro}

\setcounter{equation}{0}

Let $G$ be a real Lie group.
The theory of geometric quantization \cite{ko} associates the $G$-action on a symplectic manifold $M$ to a unitary $G$-representation $\cH$, and one studies its irreducible subrepresentations.  It is natural to consider this construction in the super setting.  This is a challenging task, as non-trivial unitary representations are absent in many cases of real simple Lie supergroups \cite[Thm.6.2.1]{ns}  and Heisenberg-Clifford Lie supergroups  \cite[Thm.5.2.1]{sa}.  In this article, we consider the Abelian Lie supergroups, a case somehow ``transversal'' to those in  \cite{ns}  and  \cite{sa}.

   The geometric quantization of the actions of connected Abelian Lie groups on their complexifications
has been carried out successfully in  \cite{jga,pems}.
We now consider its super analogue.
Let $G$ be a connected real Abelian Lie supergroup.
Its even part is
$ G_\zero \cong T_n \times \br^m $, where $T_n$ is the $n$--dimensional real torus.
 As for any Lie supergroup  (cf.\ \cite{Ga1,Ga2}),  we have a global splitting of  $ G $  of the form
\beq  G  \, = \,  T_n \times \br^m  \times {\textstyle \bigwedge}_k^\br \,,
\label{abeli}
\eeq
where
  $ \, \bigwedge_k^\br \, $  is the supermanifold associated with  the real Grassmann  algebra  in  $ k $  odd indeterminates.
  The defining superalgebra of smooth functions on  $ G $  factors into
  $$  \cO_G(G_\zero)  \, = \,  C^\infty(G_\zero) \otimes \Lambda_\br(\xi_1,\dots,\xi_k) .$$
 In particular, the local structure around a single point in  $ G_\zero $  can be described by
  a local chart,  denoted by  $(x , \xi) $.

Let $\ft_n$ be the Lie algebra of $T_n\,$.
The Lie algebra of the additive group $\br^m$ is just $\br^m$ itself,
and its exponential map  $ \, \br^m \relbar\joinrel\lra \br^m \, $  is the identity map.
Let $ \, \la \in i (\ft_n + \br^m)^* \, $,
namely  $ \, \la : \ft_n \times \br^m \!\relbar\joinrel\lra i\,\br \, $.  We say that $\la$ is an integral weight if
it determines a character  $ \, \chi_\la : T_n \times \br^m \!\relbar\joinrel\lra S^1 \, $  such that the diagram commutes,
\beq
 \begin{array}{ccc}
\ft_n \times \br^m & \stackrel{\la}{\lra} & i\,\br \\
\downarrow & & \downarrow \\
T_n \times \br^m & \stackrel{\chi_\la}{\lra} & S^1
\end{array}  \label{niw}
\eeq
where the downward arrows are exponential maps.
We write  $  \la = \la_1 + \la_2  $
and $\chi_\la = \chi_{\la_1} \chi_{\la_2}$,
where  $  \la_1 \in i \, \ft^*  $ and  $ \, \la_2 \in i \, {( \br^m )}^*  $.

Let $ \fg = \gz + \go := \Lie(G) $ be the tangent Lie superalgebra of  $ G  $.
A unitary representation of $G$ is a super Hilbert space
with compatible actions by  $G_\zero$ and $\fg$
(see Definition \ref{abi} and (\ref{cmpt}), or
\cite[\S4]{fg}\cite[Def.4.2.1]{ns} for more details).
It is said to be irreducible if it has no proper subrepresentation.
                                                                \par

Let $\wh{G}$ denote the set of all irreducible unitary $G$-representations, up to equivalence;
we define $\wh{G_\zero}  $, $\wh{T_n}$ and $\wh{\br^m}$ similarly.
All the elements of $\,\wh{T_n}$ are 1-dimensional, and are parametrized by the integral weights $\la_1$, where $V_{\la_1} \in \wh{T_n}$
consists of vectors $v$
 such that
 $ \, t \cdot v = \chi_{\la_1}(t) v \, $
for all $\,t \in T_n\,$.
%
% We identify these integral weights with $\bz^n$ and write
%
 We identify these weights with $\bz^n$,  hence
\[ T_n \, = \, \br^n/\bz^n \;\;, \quad i \, \ft^* \, \cong \, \br^n \;\;,  \quad  \wh{T_n} \, \cong \, \bz^n \;\; .\]

There is no
obstruction to the existence of $\chi_{\la_2}\,$, so $\wh{\br^m} = \br^m$, and
$\la$ is integral if and only if $\la_1$ is integral.
The integral weights are identified with $\wh{G_\zero}\,$, so that
  \[ \wh{G_\zero}  \; \cong \;  \bz^n \times \br^m . \]

\vskip9pt

\begin{theorem}  \label{dual2}
Let  $ \, G   =   T_n \times \br^m \times {\textstyle \bigwedge}_k^\br  $.
%%%%%%%%%
  Every irreducible unitary representation of  $ \, G $  is 1--dimensional, hence its superdimension is either  $ \, 1|0 \, $  or  $ \, 0|1 \, $.
   Therefore,  $ \wh{G} $  has a natural group structure given by
  \[ \wh{G} \,\;\cong\;\, \wh{G_\zero} \times \bz_2 \,\;\cong\;\,
  \bz^n \times \br^m \times \bz_2 . \]
The representation space parametrized by  $ \, (\la, \ep) \in (\bz^n \times \br^m) \times \bz_2 \, $
consists of vectors $v$ such that
$ \, g \cdot v = \chi_\la(g) v \, $  for all  $ \, g \in  \, T_n \times \br^m $
and  $ \, \xi \cdot v = 0 \, $  for all  $ \, \xi \in \fg_\one \, $.
\end{theorem}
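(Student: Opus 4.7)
\emph{Proof proposal.} The plan is to reduce Theorem \ref{dual2} to Theorem \ref{dual} via the product decomposition $\,G = G^\sharp \times \br^m\,$, where $\,G^\sharp := T_n \times \bigwedge_k^\br\,$ is the super torus already handled and $\br^m$ is a classical Abelian Lie group. The central claim is that every irreducible unitary representation $V$ of $G$ is an external tensor product $\,V \cong V^\sharp \boxtimes \chi_\mu\,$, where $V^\sharp$ is an irreducible unitary representation of $G^\sharp$ and $\chi_\mu$ is a unitary character of $\br^m$ for some $\mu \in \br^m$. Theorem \ref{dual} then supplies $\,V^\sharp \cong V_{\la_1}^\pm\,$ with $(\la_1,\pm) \in \bz^n \times \bz_2\,$, so $V$ is parametrized by $\,(\la_1,\mu,\pm) \in \bz^n \times \br^m \times \bz_2\,$ as claimed.

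To prove the factorization, I would first restrict $V$ to the subgroup $\br^m \subset G_\zero\,$. Since $\br^m$ acts unitarily on the super Hilbert space $V$, the SNAG theorem gives a direct integral decomposition $\,V = \int^\oplus_{\br^m} V_\mu \, d\nu(\mu)\,$ over characters $\chi_\mu \in \wh{\br^m} \cong \br^m\,$. Because $G$ is Abelian as a Lie supergroup, the action of $G^\sharp$ commutes with $\br^m\,$; in particular, both the even $G^\sharp_\zero$-action and the odd $\fg_\one$-action preserve each spectral component. Applying a super version of Schur's lemma to the commutant of the $\br^m$-action inside $\mathrm{End}_G(V)$, the irreducibility of $V$ as a $G$-representation forces the spectral measure $\nu$ to be a point mass at a single $\mu \in \br^m\,$, so $V$ is supported on $\chi_\mu$ alone. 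Viewed as a representation of $G^\sharp$, $V$ is then necessarily irreducible, and Theorem \ref{dual} identifies it with some $V_{\la_1}^\pm\,$. The explicit description of the $G$-action on $V$, including the vanishing of $\fg_\one$ and the character action of $T_n \times \br^m$, follows immediately from the corresponding description in Theorem \ref{dual} combined with the character action of $\br^m$.

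Once the classification is in hand, the group structure on $\wh{G}$ under tensor products is immediate: tensoring two one-dimensional super representations multiplies the $G_\zero$-characters and adds the parities modulo $2$, producing the isomorphism $\,\wh{G} \cong \wh{G_\zero} \times \bz_2 \cong \bz^n \times \br^m \times \bz_2\,$. The only genuinely new analytical step compared with the super torus case is the passage from a direct integral to a point mass over the non-compact factor $\br^m\,$: in the compact setting of Theorem \ref{dual} the spectral decomposition collapses to a direct sum and each isotypic projection lies trivially in $\mathrm{End}_G(V)\,$, while here one must verify honestly that no continuous spectrum survives. I expect this to be the main, albeit technical, obstacle; once it is dispatched, the rest of the argument is a transparent parallel to the proof of Theorem \ref{dual}.
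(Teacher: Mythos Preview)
Your approach is correct but takes a longer road than the paper. You factor $G = G^\sharp \times \br^m$ and invoke Theorem~\ref{dual} as a black box, which forces you to bring in SNAG and a point-mass argument for the $\br^m$ factor. The paper instead reruns the \emph{proof} of Theorem~\ref{dual} verbatim: Proposition~\ref{shee} (which holds for any Abelian $\fg$, not just the super torus) kills $\fg_\one$ immediately, so an irreducible unitary $G$--representation is nothing more than an irreducible unitary $G_\zero$--representation with a choice of parity; since $G_\zero = T_n \times \br^m$ is a connected Abelian Lie group, the classical fact that all such irreducibles are one-dimensional characters finishes the job in one stroke, with $\wh{G_\zero} \cong \bz^n \times \br^m$. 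In other words, the ``main technical obstacle'' you identify --- collapsing the direct integral over $\br^m$ to a point mass via a super Schur lemma --- is entirely self-inflicted: by stripping off the odd part \emph{first} rather than the non-compact even factor, the paper never meets it. Your route has the modular virtue of treating Theorem~\ref{dual} as a reusable lemma, but at the cost of analytical machinery (and some care with domains of $\pi(\xi)$ versus spectral projections) that the direct argument sidesteps completely.
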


\sk

By Theorem \ref{dual2}, we write
  \[  \wh{G}  \, = \,  \{ V_\la^\pm \;;\; \la \in \bz^n \times \br^m \, , \, \pm \in \bz_2 \}  .  \]
We construct $V_\la^\pm$ explicitly in Example \ref{contoh}.

 Let $M$ be the complexification of $ G \, $:
it is the Lie supergroup with Lie superalgebra $\,\fg \otimes \bc\,$,
such that $M$ and $G$ have the same maximal compact subgroup. Therefore
\beq  \label{split2}
  M  \, = \,  M_\zero \times M_\one  \, = \,
  \bc^n \big/ i\,\bz^n \times \bc^m \times {\textstyle \bigwedge}_k^\bc \; .
\eeq
Here $M_\zero = \bc^n / i\,\bz^n \times \bc^m$, where
$\bc^n \big/ i\,\bz^n$ is the complexification of $T_n = \br^n \big/i \bz^n$,
and $\bc^m$ is the complexification of $\br^m$.
Also, $ \, M_\one := \bigwedge_k^\bc \, $  is described through
complex odd Grassmann variables  $ \ze_1 \, $,  $ \dots $,  $ \ze_k \, $.
We use the complex coordinates  $ z , \zeta $, where
\beq  \label{loc-coord}
\begin{array}{l}
M: \; (\, z  , \zeta \,) = (z_1 , \dots , z_{n+m} , \zeta_1 , \dots , \zeta_k) \, ;
\, z_r = x_r + i\,y_r \, \mbox{ and } \, \zeta_s = \xi_s + i\,\eta_s \, ,
\\
G: \; (\,  y  , \eta \,)   =
   (y_1 , \dots , y_{n+m} , \eta_1 , \dots , \eta_k) \; .
   \end{array}
   \eeq
So  $ G $  identifies with a real super subgroup of  $ M $,
and $G$ acts on $M$ on the imaginary part $(y,\eta)$.

  We define super K\"ahler forms and super symplectic forms in Definition \ref{dska}.
  In classical symplectic geometry with symmetry,
  Hamiltonian actions and moment maps play very important roles \cite[\S26]{gs}.
  They arise in symplectic manifolds
such as coadjoint orbits and cotangent bundles,
and carry applications in
representation theory and theoretical mechanics.
 We extend these notions to the super setting in Definition \ref{momm}.

 We identify a real $d \times d$ matrix $X =(x_{pq})$ with a bilinear form on $\br^d$ by
 \beq
  X(u,v) = \sum_{p,q} x_{pq} u_q v_p \label{fhua}
  \eeq
  for all $u=(u_q), v=(v_p) \in \br^d$.
If $X$ is symmetric and satisfies $X(u,u) > 0$ for all $u \neq 0$,
 we say that $X$ is positive definite.
 Given a function $F=F(x)$ on $\br^d$, we let $F'' = (\frac{\partial^2 F}{\partial x_p \partial x_q})$
 denote its Hessian matrix.
 We say that $F$ is strictly convex if $F''$ is positive definite.
 In what follows, the indices $p,q$ cover $1,...,n+m$ with even variables,
 and cover $1,...,k$ with odd variables.

\begin{theorem} \label{kah2}
$\,$

\noindent {\rm (a)} \,
A closed $G$-invariant real $(1,1)$-form on $M$ is given
by  $ \, \om = \om_\zero + \om' \, $,
\[ \om_\zero  \; = \;  \sum\nolimits_{p,q} \, \frac{\pa^2 F}{\pa x_p \, \pa x_q} \, dx_p \we dy_q
+ \sum\nolimits_{p<q} c_{pq} \, \big( dx_p \we dx_q + dy_p \we dy_q \big) \,, \]
\[ \om'  \; = \;  \sum\nolimits_{p,q} \, k_{pq} \, \big(\, d\xi_p \, d\xi_q + d\eta_p \, d\eta_q \,\big)
   \, + \, \sum\nolimits_q \, d h_q \, d \eta_q \,,   \]
 where $c_{pq} =-c_{qp} \in \br$,
$k_{pq}= k_{qp} \in \br$,
 $h_q = h_q(\xi)$ are Grassmann polynomials in $\xi$.

 \noindent {\rm (b)} \, Let $C = (c_{pq})$, $K=(k_{pq})$, $H = \big( \frac{\pa h_p}{\pa \xi_q} \big)$.
 Then $\om$ is super K\"ahler if and only if the symmetric matrices
$\left( \begin{array}{cc}
 F'' & -C \\
 C & F''
 \end{array} \right)$
 and
 $\left( \begin{array}{cc}
 2K & -H \\
 H & 2K
 \end{array} \right)$
 are positive definite everywhere.

  \noindent {\rm (c)} \, The $G$-action is Hamiltonian if and only if
   $C=K=0$. In this case its moment map is
  $$  \Phi : M \lra \fg^* \;\; ,
   \quad   \Phi(x,y,\xi,\eta) = \Big(\frac{\pa F}{\pa x_p} \,,\, - h_q(\xi) \Big) .  $$
\end{theorem}

\sk

We apply Theorem \ref{kah2} and fix a $G$-invariant super K\"ahler form $\om$ on $M$,
where the $G$-action is Hamiltonian with moment map $\Phi$.
We extend the standard machinery of geometric quantization \cite{ko}  to the super setting and obtain a holomorphic
Hermitian line bundle $\bl$ on $M$.
Let $\cH(\bl)$ denote its holomorphic sections.
We define the star operator $f \mapsto f^*$ on $C^\infty(\bl)$
  --- see (\ref{starr})  ---   then apply Berezin integration \cite{va}
to construct the super Hilbert space
%(see Definition \ref{suhi})
%
  \beq
    \cH^2(\bl)  \,\; := \;\,  \big\{\, f \in \cH(\bl) \;;\;
\int_M \! f f^* \, d\mathcal{B} \;\; \ \mbox{converges} \,\big\} \;\; .  \label{inhe}
\eeq
                                                     \par
  The  $ G $-representation  on $\cH^2(\bl)$ is not unitary.
  It contains $\cH^2(\blz)$ as the largest $G$-subrepresentation
in which the $G$--action is unitary, and we
consider the irreducible unitary subrepresentations which occur in  $\cH^2(\blz)\,$.

  Recall that,  by Theorem \ref{dual2},
  $ \wh{G} = \big\{\, V_\la^\pm \,\big|\, \lambda \in \bz^n \times \br^m \,\big\} $.
For the factor $\br^m$,
its Plancherel measure provides zero measure on each member
(unlike $\bz^n$, whose members have point mass).
For this reason, the occurrence of a subrepresentation is understood
as appearance in the direct integral decomposition of $\cH^2(\blz)$,
see Definition \ref{weakocc}.  With this in mind, we have the following theorem.
Let $\im(\Phi)_\zero \subset \gz^*$ denote the even part of the image of the moment map $ \Phi $.

\sk

\begin{theorem}  \label{occ2}
Let  $ G   =   T_n \times \br^m  \times {\textstyle \bigwedge}_k^\br $.  Then
 $ \cH^2(\bl)  $  is a super Hilbert space, and  $ \cH^2(\bl_\zero)  $
  is its largest  $ G $-subrepresentation  in which the  $ G $-action is unitary.
   Moreover,  $  \cH^2(\bl_\zero)  $  is multiplicity free,
   with  $  V_\la^+ $  occurring if and only if  $ \la \in {\im(\Phi)}_\zero $.
   Also,  $  V_\la^- $  does not occur in  $  \cH^2(\bl_\zero) $,  for any
 $ \lambda  $.
\end{theorem}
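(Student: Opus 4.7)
The plan is to mirror the argument of Theorem~\ref{occ}, replacing the discrete Pontryagin dual of the super torus by the mixed discrete/continuous dual of $G_\zero = T_n \times \br^m$ and reinterpreting ``occurrence'' in the sense of Definition~\ref{weakocc}. First I would repeat the geometric--quantization construction: starting from the $G$--invariant super K\"ahler form $\om$ described in Theorem~\ref{kah2}, I build the prequantum line bundle $\bl$ on $M$, equip it with its Hermitian and holomorphic structures, and form the super Hilbert space $\cH^2(\bl)$ via the star operator and Berezin integral exactly as in (\ref{inhe}). Since the odd coordinates $\eta$ of (\ref{loc-coord}) and the odd-direction part of $\om$ are identical to those of the super torus case, the verification that $\cH^2(\bl)$ is a super Hilbert space and that $\cH^2(\bl_\zero)$ is the largest $G$--subrepresentation on which the $G$--action is unitary carries over verbatim: outside $\cH^2(\bl_\zero)$ the $\fg_\one$--action fails to be symmetric against the Berezin pairing, while on $\cH^2(\bl_\zero)$ the odd part acts trivially, matching the label $\ep = \zero$ from Theorem~\ref{dual2}.

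The core new input is the spectral analysis of $\cH^2(\bl_\zero)$ under $G_\zero = T_n \times \br^m$, acting on $M_\zero = \bc^n\!/i\bz^n \times \bc^m$. After trivialising $\bl_\zero\,$, elements of $\cH^2(\bl_\zero)$ become holomorphic functions on $M_\zero$ that are square--integrable against the weight $e^{-2F(x)}$, in the sense of (\ref{inhe}). Along the $T_n$ factor I decompose as a Fourier series indexed by $\la_1 \in \bz^n$, and along the $\br^m$ factor as a direct--integral Fourier transform indexed by $\la_2 \in \br^m$; holomorphicity then forces the $(\la_1,\la_2)$--isotypic component, when non-zero, to be spanned by a single section of the form $\,c_\la\,e^{2\pi\la\cdot z}\,$. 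The convergence condition in (\ref{inhe}) reduces, by Fubini and Berezin integration, to the finiteness of a Gaussian-type integral $\,\int_{\br^{n+m}} e^{-2F(x)+4\pi\,\la\cdot x}\,dx\,$; by strict convexity of $F$ and a standard Legendre-transform argument this is finite precisely when $\,-\la \in \im(F')\,$, i.e.\ $\,\la \in \im(\Phi)_\zero\,$ via the moment--map formula of Theorem~\ref{kah2}. This yields simultaneously multiplicity one and the characterisation of which $V_\la^+$ occur; the absence of any $V_\la^-$ in $\cH^2(\bl_\zero)$ is automatic, since the odd subalgebra $\fg_\one$ acts by zero there and only the ``$+$''--parity matches.

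The main obstacle is the continuous $\br^m$ direction: because the dual of $\br^m$ carries Lebesgue rather than counting measure, the putative isotypic components for non-integer $\la_2$ are not honest closed subspaces, and every statement must be phrased via the direct--integral framework of Definition~\ref{weakocc}. The delicate step is to certify that the spectral measure of $\cH^2(\bl_\zero)$ has support exactly equal to $\,\{\la \in \bz^n \times \br^m : \la \in \im(\Phi)_\zero\}\,$, with multiplicity one on this set. Here I would exploit the openness and convexity of $\im(F') \subset \br^{n+m}$, which hold because of the positive definiteness of $F''$, to first establish the decomposition on a dense subspace of Schwartz--type sections along the $\br^m$ factor and then extend by continuity using the Plancherel theorem; Remark~\ref{subt} already isolates the topological subtlety between the $T_n$ and $\br^m$ factors that this argument must respect. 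Once this is in place, the theorem follows.
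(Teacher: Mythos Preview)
Your overall architecture matches the paper's proof: construct $\bl$ and $\cH^2(\bl)$ as in Section~\ref{geomquan}, invoke Propositions~\ref{nonv} and~\ref{maxu} to identify $\cH^2(\bl_\zero)\cong\cH^2(M_\zero,e^{-2F})$ as the largest unitary subrepresentation, and then analyze the $G_\zero$--spectrum of this weighted Bergman space. The one substantive difference is in that last step: the paper does not carry out any Fourier or Legendre-transform analysis itself, but simply cites \cite[Thm.~1.2]{pems}, which already provides the direct-integral decomposition of $\cH^2(M_\zero,e^{-2F})$ and characterizes the support of its spectral measure as $\im(-F')$; the translation to $\im(\Phi)_\zero$ then follows from Theorem~\ref{kah2}. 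What you are sketching is, in effect, a reproof of that cited result.

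One caution about your sketch: the criterion ``$\int_{\br^{n+m}} e^{-2F(x)+4\pi\la\cdot x}\,dx<\infty$'' is the right object for the discrete $T_n$--labels, but for the $\br^m$--labels the individual exponentials $e^{2\pi\la\cdot z}$ are \emph{never} in $\cH^2(\bl_\zero)$, since the $dy$--integral over the non-compact $\br^m$ factor diverges regardless of $\la$. You flag this in your ``main obstacle'' paragraph, but the earlier paragraph reads as if the pointwise $L^2$ test still decides occurrence; it does not. The actual argument (carried out in \cite{pems}) identifies the spectral support directly in the direct-integral sense of Definition~\ref{weakocc}, without ever testing single exponentials for square-integrability along $\br^m$. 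Your Plancherel/density strategy is a reasonable route to that, but be careful not to conflate it with the torus-case convergence test.
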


\sk

 According to Gelfand, a  \textit{model}  of a Lie group is a unitary representation on a Hilbert space
in which every irreducible representation occurs exactly once \cite{gz}.
The model of $G_\zero$ has been constructed in \cite[Cor.3.3]{pems}.
It is natural to extend this notion to the super setting,
so we say that a model of $G$ is a unitary representation
on a super Hilbert space in which every member of $\wh{G}$ occurs once.
   \hbox{We now construct a model.}

By Theorem \ref{occ2}, the odd representations
$V_\la^-$ do not occur in $\cH^2(\bl_\zero)\,$.
To remedy this defect, we apply the involutive endofunctor
 $\Pi$ that is the identity on each object  but reverses the
 $ \bz_2 $-grading,
 namely
\beq  \label{switch}
  \Pi \, V_\la^+  \, = \;  V_\la^- .
\eeq

We now apply Theorems \ref{kah2} and \ref{occ2} to construct a model of $G$ as follows.
Let $F \in C^\infty(\br^{n+m})$ be a strictly convex function, and all $c_{jk} = k_{pq} = h_p =0$,
so that $\om$ as appears in Theorem \ref{kah2} is a super K\"ahler form on $M$
with Hamiltonian $G$-action. Let
   $F' = \big( \frac{\pa F}{\pa x_j} \big) : \br^{n+m} \lra \br^{n+m} = \fg^*$
   denote the gradient mapping.

\sk

\begin{corollary}  \label{mode}
 Suppose that  $ \, F' \, $  is surjective.  Then
 $ \ch^2(\bl_\zero) \oplus \Pi \, \ch^2(\bl_\zero) $  is a model of  $  G  $.
\end{corollary}

\sk

   In view of  Theorem \ref{occ2},  one might wonder if $ \ch(\bl)$ contains
   any $G$-subrepresentation beyond $ \ch(\bl_\zero)$ which is irreducible or unitarizable (apart from
    using the $L^2$-structure (\ref{inhe})).  In this respect, we find the following answer, in the negative:

\sk

\begin{theorem}  \label{thm: L-NO-unitary}
Every irreducible or unitarizable $G$-subrepresentation of $\ch(\bl)$
is contained in  $\ch(\bl_\zero)  $.
\end{theorem}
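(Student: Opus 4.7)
The plan has two ingredients. First, since $G$ is Abelian, the left-invariant odd vector fields on $G$ extend holomorphically to $M$, so each $\xi_s \in \fg_\one$ acts on holomorphic sections of $\bl$ as (a constant multiple of) $\partial / \partial \zeta_s$. Consequently, a section $f = \sum_I f_I(z)\,\zeta^I$ annihilated by $\fg_\one$ must have $f = f_\emptyset$, so $\ch(\bl)^{\fg_\one} = \ch(\bl_\zero)$, and it suffices to prove $\fg_\one \cdot W = 0$ in both cases.

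Second, a short nilpotence lemma: every nonzero $\fg_\one$-invariant sub-super-vector-space $W' \subseteq \ch(\bl)$ contains a nonzero element annihilated by $\fg_\one$. Given $0 \neq v \in W'$, the finite-dimensional subspace $\Lambda(\fg_\one)\cdot v$ lies in $W'$; taking a subset $S \subseteq \{1,\ldots,k\}$ maximal (for inclusion) among those with $v' := \bigl(\prod_{s\in S}\xi_s\bigr)\,v \neq 0$ produces an element killed by every $\xi_t$, since $\xi_t^2=0$ handles $t\in S$ and maximality of $S$ handles $t\notin S$.

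\emph{Irreducible case.} $V := W^{\fg_\one}$ is a sub-super-vector-space of $W$ (an odd operator has graded kernel), and since $G_\zero$ commutes with $\fg_\one$ in the Abelian setting, $V$ is $G$-invariant. By the lemma $V \neq 0$, so irreducibility forces $V = W$, whence $W \subseteq \ch(\bl_\zero)$. Note in particular that this argument does \emph{not} need any Hermitian structure, which is the point of treating irreducibility separately.

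\emph{Unitarizable case.} Fix a $G$-invariant super-Hermitian form on $W$. For homogeneous $v \in W$ and $\xi \in \fg_{\one,\R}$, the super-symmetry of the odd action built into the definition of a unitary Lie-supergroup representation (cf.\ Definition \ref{abi}) yields
$$ \langle \xi v, \xi v \rangle  \;=\;  \pm\, \langle v, \xi^2 v\rangle  \;=\;  0, $$
the last equality because $\fg$ is Abelian, forcing $\xi^2=0$. As $\xi v$ is homogeneous and the form is positive-definite on each parity component of the super-Hilbert space, this gives $\xi v = 0$; hence $\fg_\one \cdot W = 0$ and $W \subseteq \ch(\bl_\zero)$. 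The main obstacle, and the point of the plan that most needs care, is the very first step: identifying the odd action of $\fg_\one$ on holomorphic sections of $\bl$ with (a multiple of) $\partial/\partial \zeta_s$ in the presence of the quantization line-bundle cocycle, and verifying the super-adjointness of that action under the invariant Hermitian form. Once these compatibilities are spelled out from the constructions of $\bl$ and of the $G$-action on $\ch(\bl)$ given earlier in the paper, the nilpotence and positivity arguments above close both cases cleanly.
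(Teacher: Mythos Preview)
Your proof is correct and somewhat more direct than the paper's. The paper proceeds via the tensor factorization $\cH(M)=\cH(M_\zero)\otimes_\C\Lambda_\C^k$ and two auxiliary results about the $G_-$--module $\Lambda_\C^k$ (Propositions~\ref{prop: Lambda-NO-semisimple} and~\ref{finalee}): the only irreducible, and the only unitarizable, $G_-$--subrepresentation of $\Lambda_\C^k$ is $\C\cdot 1$. It then asserts that any irreducible $W\subset\cH(M)$ must split as $W_+\otimes W_-$ because $\fg=\fg_+\oplus\fg_-$, and applies the auxiliary results to force $W_-=\C\cdot 1$. Your argument bypasses this tensor-splitting step entirely: your nilpotence lemma produces a nonzero $\fg_\one$--fixed vector in any $\fg_\one$--stable subspace, and since $W^{\fg_\one}$ is a $G$--submodule (Abelianness), irreducibility gives $W=W^{\fg_\one}\subset\cH(\bl_\zero)$. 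For the unitarizable case both proofs are essentially the same --- the paper's Proposition~\ref{finalee} ultimately rests on Proposition~\ref{shee}, which is exactly the $\langle\xi v,\xi v\rangle=\pm\langle v,\xi^2 v\rangle=0$ computation you wrote out. The paper's route yields, as a byproduct, structural information about $\Lambda_\C^k$ (non-complete-reducibility, unique irreducible socle); your route is leaner and sidesteps the claim that an irreducible submodule of a tensor product of modules over a direct sum must itself be a tensor product, which the paper takes for granted. Your caveat about identifying the $\fg_\one$--action with $\partial/\partial\zeta_s$ through the line-bundle trivialization is well placed; the paper handles this via the $G$--invariant section of Proposition~\ref{nonv}, the $G$--equivariant isomorphism~\eqref{isom-hol}, and the explicit computation~\eqref{eq: D.xii(xij)}.
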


\sk

   We organize the sections of this article as follows.
Section \ref{sec: supergeometry} recalls the notions and language of Lie superalgebras and Lie supergroups.
Section \ref{irredu} proves Theorem \ref{dual2}, which classifies the irreducible unitary  representations
$\wh{G}$.
Section \ref{superkahler} proves Theorem \ref{kah2}, which classifies
the $G$-invariant super K\"ahler forms on $M$ and studies their moment maps.
Section \ref{geomquan} performs geometric quantization and proves Theorem \ref{occ2},
which studies the irreducible representations that occur
in $\ch^2(\bl )$. It also proves Corollary \ref{mode},
which constructs a model of $G$ in terms of $\ch^2(\bl_\zero)$.
Section \ref{nono} proves Theorem \ref{thm: L-NO-unitary},
which restricts the irreducibility and unitarizability of
subrepresentations of $ \, \ch \big(\bl \big)$.

% \newpage
\bigskip
   \bigskip

 %%%%%%%%%%%%%%%%%%%%%%%%%%%%%%%%%%%%%%%%%%%%%%%%%%%%%%%%%%%%%%%%%
%%%%%%%%%%%%%%%%%%%%%%%%%%%%%%%%%%%%%%%%%%%%%%%%%%%%%%%%%%%%%%%%%%
%%  SECTION TWO
%%%%%%%%%%%%%%%%%%%%%%%%%%%%%%%%%%%%%%%%%%%%%%%%%%%%%%%%%%%%%%%%%%
 %%%%%%%%%%%%%%%%%%%%%%%%%%%%%%%%%%%%%%%%%%%%%%%%%%%%%%%%%%%%%%%%%%

\section{Preliminaries in supergeometry}  \label{sec: supergeometry}
\setcounter{equation}{0}

 In this section, we recall the notions and languages of Lie superalgebras and Lie supergroups.
 We shall always assume that their base fields are $\br$ or $\bc$.
 Everything indeed is standard matter, we just fix the terminology.

\medskip

\subsection{Basic superobjects} \label{subsec:basic-sobjcs}
 Let $ \bz_2 = \{\zero,\one\} $ be the 2-element group.
 A  \textit{super vector space} is a
 $ \bz_2 $-graded vector space $ V = V_\zero \oplus V_\one $.
 Then $ V_\zero $ and its elements are called  even,  $ V_\one $ and its elements
  are called odd. By  $ |x| \in \bz_2 $  we denote the parity of a homogeneous element $x$.

   We call  \textit{superalgebra}  any associative, unital  $ \bz_2 $-graded
   algebra $ A = A_\zero \oplus A_\one $, where  $ A_{\overline{\text{a}}} A_{\overline{\text{b}}} \subseteq A_{\overline{\text{a}}+\overline{\text{b}}} $.
   It is said to be  \textit{commutative}  if  $  x y = (-1)^{|x| |y|} y x  $  for all homogeneous  $ x, y \in A $;  so in particular
   $ z^2 = 0 $  for all  $ z \in A_\one $.
   %All  $ \bk $--superalgebras  form a category, whose morphisms are those of unital  $ \bk $--algebras  preserving the  $ \bz_2 $--grading:
%%%%%
 %we write  $ \, \alg \, $  for this category;
%%%%%
 %inside it, commutative  $ \bk $--superalgebras  form a subcategory, denoted by  $ \, \salg \, $.
%%%%%
% We write  $ \, \alg \, $  for the category of (associative, unital) commutative  $ \bk $--algebras,  and
%%%%%Also, we write
%%%%%$ \, {(\text{\bf mod})} \, $  for the category of  $ \bk $--modules.  There exists an obvious functor  $ \, {(\ )}_\zero \colon \salg \longrightarrow \alg \, $ given on objects by  $ \, A \mapsto A_\zero \, $.
%

   A  \textit{Lie superalgebra}  is a super vector space $\fg = \fg_\zero \oplus \fg_\one $  with a Lie super bracket
    \[ [ \cdot, \cdot ] \colon \fg \times \fg \longrightarrow \fg \;\;,\;\; (x,y) \mapsto [x,y] ,\]
      which is bilinear,  preserves the $ \bz_2 $-grading, and  for all homogenenous $x, y, z \in \fg $,
\begin{enumerate}  \itemsep=0pt
  \item[(a)]  \quad  $ [x,y] + {(-1)}^{|x| |y|}[y,x] = 0 $  \quad  {\it $($anti-symmetry$)$},
  \item[(b)]  ${(-1)}^{|x| |z|} [x, [y,z]] + {(-1)}^{|y| |x|} [y, [z,x]] + {(-1)}^{|z| |y|} [z, [x,y]] = 0 $ {\it $($Jacobi identity$)$}.
\end{enumerate}
%All Lie $ \bk $--superalgebras form a category, denoted by $ \sLie_\bk \, $, whose morphisms are $ \bk $--linear, preserving the $ \bz_2 $--grading and the bracket. Note that $ \fg_\zero $ is automatically a Lie $ \bk $--algebra.

\smallskip

\subsection{Supermanifolds and supergroups}  \label{subsec: super-manfds/groups}
 We now briefly recall the notions of supermanifolds and Lie supergroups: see  \cite{ccf, va}  for more details.
 \vskip5pt
   \textsl{$ \underline{\text{Superspaces}} $.}  A  superspace  is a topological space  $S$  with a sheaf of commutative superalgebras  $ \, \cO_S \, $ such that the stalk  $ \, \cO_{S,x} \, $  of  $ \, \cO_S \, $  at each  $ \, x \in S \, $  is a local superalgebra.  A  morphism  $ \, \phi \colon S \longrightarrow T \, $  is a continuous map
with a  morphism of sheaves
$ \, \phi^* \colon \cO_T \longrightarrow \phi_*(\cO_S) \, $ such that  $ \, \phi_x^*(\mathfrak{m}_{\phi(x)}) \subseteq \mathfrak{m}_x \, $,  \,where  $ \, \mathfrak{m}_{\phi(x)} \, $  and  $ \, \mathfrak{m}_{x} \, $  are the unique maximal ideals in the stalks  $ \, \cO_{T,\phi(x)} \, $  and  $ \, \cO_{S,x} \, $ respectively.

   The holomorphic linear supervariety $\cH_\bc^{p|q} $  is the topological space $ \bc^p $ endowed with the sheaf of
   commutative superalgebras $  \cO_{\cH_\bc^{p|q}}(U) := \cH_{\bc^p}(U) \otimes_\bc \Lambda_\bc(\xi_1,\dots,\xi_q) =: \cH_{\bc^{p|q}}(U) $  for any open set $ U \subseteq \bc^p $, where $ \cH_{\bc^p} $ is the sheaf of holomorphic functions on $ \bc^p $, and
   $ \Lambda_\bc(\xi_1,\dots,\xi_q)  $ is the complex Grassmann algebra on odd variables $ \xi_1 ,..., \xi_q $.  A holomorphic supermanifold of super dimension $ p|q $ is a superspace $ \, M \, $ which is locally isomorphic to $ \cH_\bc^{ p|q} \, $  as a locally-ringed space, i.e. for each $ x \in M  $, there is an open set $ V_x \subseteq M $ with $ x \in V  $ and $  U \subseteq \bc^p  $
   such that $  \cO_M{\Big|}_V \cong \cO_{\cH_\bc^{p|q}}{\Big|}_U $.
   %A morphism between holomorphic supermanifolds is a morphism between them as complex superspaces.  We denote the category of holomorphic supermanifolds by ($\cat{hsmfd}$)\,.

   With a similar construction, we define
    real smooth supermanifolds. Here we replace $\cH_\bc^{ p|q} $  by $ \br^p $ endowed with  $  \cO_{\cc_\br^{p|q}}(U) := C^\infty_{\br^p}(U) \otimes_\br \Lambda_\br(\xi_1,\dots,\xi_q) =: C_{\br^{p|q}}^\infty(U) $, where  $ C^\infty_{\br^p} $
    is the sheaf of  smooth  functions on $ \br^p $,
    and we replace $\Lambda_\bc(\xi_1,\dots,\xi_q)$ by $ \Lambda_\br(\xi_1,\dots,\xi_q) $.
 \vskip5pt
   Given a real smooth supermanifold  $ M $  and an open subset  $ U $,  let us choose a local chart around a point in  $ U $:  the even coordinates  $ x_i $  in this chart along with $ \xi_j $  provide a  smooth local chart  for  $ U $   as a superspace, which we will later denote by  $  (x, \xi) := (x_1 , \dots , x_p , \xi_1 , \dots , \xi_q) $.
   For holomorphic supermanifold, one defines the holomorphic local chart
   in a similar manner.  See  \cite{ccf}  for further details.
%

%
   %Let now $ M $ be a holomorphic supermanifold and $ U $ an open subset in $M$. Let $ \ci_M(U) $ be the ideal of $ \cO_M(U) $ generated by the odd part of the latter: then $ \cO_M \big/ \ci_M $ defines a sheaf of purely even superalgebras over $ |M| \, $, locally isomorphic to  $ \, \cH_{\bc^p} \, $. Then the reduced manifold $ \, M_{\rm rd} := \big(\, |M| \, , \cO_M / \ci_M \,\big) \, $  is a classical  holomorphic manifold, called the  underlying holomorphic submanifold  of  $ M \, $. The projection $ \, s \mapsto \tilde{s} := s + \ci_M(U) \, $,  for  $ \, s \in \cO_M(U) \, $,  at the sheaf level yields an embedding $ \, M_{\rm rd} \longrightarrow M \, $, so  $ M_{\rm rd} $  can be seen as an embedded sub-supermanifold of  $ M \, $. The whole construction is functorial in  $ M \, $.

   %The same construction applies to real smooth supermanifolds as well.
%

%
   Each classical manifold
 can be seen as a supermanifold, just by regarding its structure sheaf as one of superalgebras that have trivial odd part.
 \vskip5pt
   \textsl{$ \underline{\text{Lie supergroups}} $.}  Any group object in the category of holomorphic supermanifolds
    (resp. real supermanifolds)
    is called a  holomorphic Lie supergroup (resp. real Lie supergroup).
    %These objects, together with their obvious morphisms, form a subcategory among supermanifolds, denoted by  $ {(\cat{Lsgrp})}_\bc \, $.  Similarly, the group objects in the category  ($\cat{ssmfd}$)  are called real smooth Lie supergroups:  together with their obvious morphisms, they form a subcategory  $ {(\cat{Lsgrp})}_\br $  of  ($\cat{ssmfd}$)\,.
 The functor Lie links Lie supergroups $G$ to Lie superalgebras
 $\fg = \mbox{Lie}(G)$.  The super version of the correspondence between Lie groups and Lie algebras hold true.
%

%
%    A relevant aspect of the theory of holomorphic or smooth supermanifolds is that they can
% be entirely studied in terms of the algebra of global sections of their structure sheaf, i.e.\
% $ \, \cO_M\big(|M|\big) \, $  for any supermanifold  $ M $.  In addition,
%

 A key feature of Lie supergroups is that they admit a global splitting.
 Namely a Lie supergroup $G$  over $\bk \in \{\br,\bc\}$ of dimension $p|q$ can be expressed as
%
% superalgebra
%
% isomorphisms
%%%%%%%%%
% $ \; \cO_G\big(|G|\big) \, \cong \, \cO_{G_\zero}\!\big(G_\zero\big) \mathop{\otimes}\limits_\bk \Lambda_\bk(\xi_1,\dots,\xi_q) \; $
%%%%%%%%%
%%%%%%%%%
%   for any supergroup  $ G $   --- with  $ q $  such that the super dimension of  $ G $  is  $ \, p|q \, $.  Geometrically, this means that there exist supermanifold splittings
\[  G \, \cong \, G_\zero \times \bigwedge_q^\bk  ,\]
where $G_\zero$ is a Lie group of dimension $p$,
and $\bigwedge_q^\bk$  is the supermanifold given by
the singleton $\{*\}$ together with
%  $\big|\bigwedge_q^\bk\big| := \{*\}$  and
$\cO_{\bigwedge_{\,q}^\bk}\big(\{*\}\big) := \Lambda_\bk(\xi_1,\dots,\xi_q)$,
i.e. a single point endowed with a purely odd,  $ q $-dimensional  affine superstructure.
See for instance  \cite{ccf}  and  \cite{Ga1,Ga2}.

\smallskip

%%%%%%%%%
%%%%%%%

\subsection{Unitarity}  \label{subsec: unitary rep.'s}
%
%%%%%%%%%

 Let  $ V $  be a complex super vector space.
 A super Hermitian metric on
 $ V $  is a map  $ \, B : V \times V \lra \bc \, $  which is  $ \bc $--linear (resp.\  $ \bc $--antilinear)  in the first (resp.\ the second) entry, such that for all non-zero homogeneous vectors  $ u , v \in V $,
\beq
  \begin{array}{cl}
    \mbox{(a)}  &  B(u,v) = 0  \mbox{\ \  if  $ \, |u| \neq |v| \, $}  \quad \quad \mbox{\ \ \textsl{(consistent)}}  \\
    \mbox{(b)}  &  B(u,v) = (-1)^{|u| \cdot |v|} \overline{B(v,u)}  \qquad
\mbox{\ \ \textsl{(super Hermitian symmetric)}}  \\
    \mbox{(c)}  &  B(v,v) \in i^{|v|} \br_+ \qquad \qquad \qquad  \mbox{\ \ \textsl{(super positive)}}.
       \label{abi}
  \end{array}
\eeq
 In  \eqref{abi}(c),  we make the convention that
\beq
  i^{|v|} \in \{ i^0 , i^1 \} = \{1,i\}  \label{ico}
\eeq
 to avoid ambiguity arising from  $ \, i^{[2]} \, $  and so on   ---  see \cite[\S4.1]{fg}.

The set $\fg \fl(V)$ of all linear maps from $V$ to itself
 is naturally a complex Lie superalgebra.
Let
 %the super analogue of the skew-adjoint operators,
\begin{equation}  \label{eqx}
  \hskip-5pt   \mathfrak{u}_B(V)  \, = \,  \{ u \!\in\! \fgl(V)\;;\; B(u(v) , w) + {(-1)}^{|u||v|} B( v , u(w)) = 0 \, , \; v, w \in V_\zero \cup V_\one \}.
\end{equation}
 This is a real subalgebra of $\fg \fl(V)$.
A unitary representation of $\fg$ on $V$ is a Lie superalgebra morphism
$ \fg \lra \fu_B(V) $.

 Observe that  $ B $  (resp.\  $ -iB  $)  is positive definite on  $ V_\zero $
%%%%%%%%%
   (resp.  on  $ V_\one $).
%%%%%%%%%
%%%%%
% \,so  $ \, H \oplus (-iH) \, $
%%%%%
 Therefore,  $ B\big|_{V_\zero} \! \oplus (-iB)\big|_{V_\one}  $
%%%
 is an ordinary inner product on  $ V = V_\zero \oplus V_\one $.

\begin{definition}
 {\rm A super Hilbert space is a super vector space  $ V $
 equipped with a super Hermitian metric  $ B$  such that the ordinary
   inner product  $ \; B\big|_{V_\zero} \! \oplus (-iB)\big|_{V_\one} \; $
   makes  $ \, V = V_\zero \oplus V_\one \, $  into a complete metric space.}   \label{suhi}
\end{definition}

Let $V$ be a super Hilbert space with metric $B$.
Let $U_B(V)$ denote the group of all automorphisms on $V$ which preserve $B$.
Let  $  \fg = {\rm Lie}(G)  $.
A unitary representation of $G_\zero$ is a group homomorphism
 $ \rho : G_\zero \relbar\joinrel\lra U_B(V) $.
 Its differential gives a unitary Lie algebra representation
 on the smooth vectors $V^\infty$, namely
 $ d \rho : \fg_\zero \relbar\joinrel\lra \fu_B(V^\infty)  $
 (see for instance \cite[\S4]{ns}).

   A unitary representation of a Lie supergroup $G$ on a super Hilbert space $V$
 consists of unitary representations of $G_\zero$ and  $ \fg $,
\beq
  \rho : G_\zero \relbar\joinrel\lra U_B(V) \;\; ,  \quad
 \pi : \fg \relbar\joinrel\lra \fu_B(V)  \label{cmpt}
\eeq
 such that  $\rho$ and $\pi$ are compatible
%%%%%%%%%
   in the sense that $d \rho = \pi$ on $V^\infty$
 \cite[Def.4.2.1]{ns}.
 \vskip7pt
   The following are basic results about unitary representations,
%
%%%%%
% (of Lie superalgebras or Lie supergroups),
%%%%%
%
 which are proved exactly as in the non-super framework:

\vskip7pt

\begin{lemma}  \label{lemma: orthog-unitary}
 Let  $ V $  be a unitary representation of a complex Lie superalgebra or a complex Lie supergroup
 with respect to some form  $ B  $.
   If  $ W $  is a subrepresentation of  $ \, V $,  then its orthogonal space
  $$  W^\perp  \, := \,  \{ v \in V \;\;;\;\; B(v,w) = 0 = B(w,v) \;\; \forall \, w \in W \}  $$
 is a subrepresentation as well.
\end{lemma}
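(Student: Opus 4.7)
The plan is to verify invariance of $W^\perp$ separately under the two pieces of structure defining a unitary representation, namely the even group action via $\rho$ (in the supergroup case) and the Lie superalgebra action of $\fg$ via $\pi$, using in each case the appropriate ``unitarity'' relation. Before that, I would record two preliminary observations about $W^\perp$ that are automatic: consistency \eqref{abi}(a) of $B$ forces the orthogonal of the $\bz_2$-graded subspace $\,W = W_\zero \oplus W_\one\,$ to itself be $\bz_2$-graded, so $W^\perp$ is a sub-super vector space; and super Hermitian symmetry \eqref{abi}(b) ensures that the two conditions $\,B(v,w)=0\,$ and $\,B(w,v)=0\,$ defining $W^\perp$ are equivalent, so only one need be checked.

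For the Lie superalgebra part, I would pick homogeneous $\,v \in W^\perp$, $\,u \in \fg\,$ and $\,w \in W\,$. Since $\pi(u) \in \fu_B(V)$, the defining relation in \eqref{eq: unit-Lie-superalg} gives
$$B\big(\pi(u) v, w\big) \;=\; -{(-1)}^{|u||v|} \, B\big(v, \pi(u) w\big).$$
Because $W$ is a $\fg$-subrepresentation one has $\,\pi(u) w \in W\,$, and since $\,v \in W^\perp\,$ the right-hand side vanishes; hence $\,\pi(u) v \in W^\perp\,$.

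For the Lie supergroup part I would additionally check invariance under $\rho(g)$ for $\,g \in G_\zero\,$. Here one uses directly that $\,\rho(g) \in U_B(V)\,$, i.e.\ $\,B\big(\rho(g)\,\cdot\,, \rho(g)\,\cdot\,\big) = B(\cdot, \cdot)\,$, to write
$$B\big(\rho(g) v, w\big) \;=\; B\big(v, \rho(g)^{-1} w\big) \;=\; 0,$$
since $\,\rho(g)^{-1} w = \rho(g^{-1}) w \in W\,$ and $\,v \in W^\perp\,$. The compatibility \eqref{cmpt} between $\rho$ and $\pi$ supplies no new invariance conditions beyond those already handled.

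I do not foresee any real obstacle: the super setting introduces only sign factors that cancel in the computation, and the only mild subtlety is to remember that $W^\perp$ is defined by two orthogonality conditions, which reduce to one by super Hermitian symmetry.
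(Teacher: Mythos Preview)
Your proof is correct and is precisely the standard classical argument, adapted with the appropriate super sign conventions; the paper itself does not spell out a proof but simply remarks that the result is ``proved exactly as in the non-super framework'', which is exactly what you have done.
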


\vskip7pt

A representation is said to be completely reducible if it is the direct sum of irreducible subrepresentations.
   Lemma \ref{lemma: orthog-unitary}, through induction, yields the following:

\vskip9pt

\begin{proposition}  \label{prop: unitary => semisimple}
 Every finite dimensional unitary representation of a complex Lie superalgebra or a complex Lie supergroup is completely reducible.
\end{proposition}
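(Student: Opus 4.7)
The plan is to proceed by induction on the super dimension of the representation space $V$, using Lemma \ref{lemma: orthog-unitary} as the central tool. The base case is clear: if $V$ is zero or irreducible, there is nothing to show. For the inductive step, I would take a proper nonzero subrepresentation $W \subseteq V$ (which exists by hypothesis of non-irreducibility). By Lemma \ref{lemma: orthog-unitary}, $W^\perp$ is also a subrepresentation. If I can establish the orthogonal decomposition $V = W \oplus W^\perp$, then both $W$ and $W^\perp$ have strictly smaller super dimension than $V$, so by induction they are direct sums of irreducibles, and hence so is $V$.

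The heart of the argument is thus to verify the decomposition $V = W \oplus W^\perp$. Here is where the axioms of a super Hermitian metric do the work. By the consistency axiom (\ref{abi})(a), the form $B$ vanishes on mixed parity pairs, so $B$ splits as the orthogonal sum of its restrictions to $V_\zero$ and $V_\one$. By (\ref{abi})(b)--(c), $B\big|_{V_\zero}$ is a genuine Hermitian inner product and $(-i\,B)\big|_{V_\one}$ is a genuine Hermitian inner product as well; both are positive definite, hence non-degenerate. Since $W$ is a sub-super vector space, i.e.\ $W = W_\zero \oplus W_\one$ with $W_{\bar a} \subseteq V_{\bar a}$, the standard finite dimensional linear algebra on each parity component gives
\[
   V_\zero \, = \, W_\zero \oplus W_\zero^{\perp_B}\, , \qquad
   V_\one \, = \, W_\one \oplus W_\one^{\perp_{-iB}}\, .
\]
Summing these and using consistency of $B$ to identify $W^\perp$ with $W_\zero^{\perp_B} \oplus W_\one^{\perp_{-iB}}\,$, one obtains $V = W \oplus W^\perp$ as desired.

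The main (and only real) obstacle is precisely this orthogonal decomposition step. The subtle point is to make sure that the positivity of $B$ on $V_\zero$ and of $-iB$ on $V_\one$ is what makes the restricted forms non-degenerate on each parity component, so that the classical Gram--Schmidt argument applies separately in each parity. Once this is in place, the induction runs cleanly, handling Lie superalgebra and Lie supergroup representations uniformly, because Lemma \ref{lemma: orthog-unitary} already covers both cases.
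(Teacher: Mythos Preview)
Your proposal is correct and follows exactly the route indicated in the paper: the paper states only that ``Lemma~\ref{lemma: orthog-unitary}, together with an induction argument, yields the following,'' and you have supplied precisely that induction, including the verification that the super Hermitian metric is non-degenerate on each parity component so that $V = W \oplus W^\perp$ holds. Your write-up is in fact more detailed than what the paper provides.
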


 \vskip7pt
%
%%%%%%%%%

%%%%%%%

\subsection{Abelian connected Lie supergroups}  \label{Abel-conn-sgroups}
 We shall work with a connected Abelian real Lie supergroup
 \[ G := T_n \times \R^m \times \bigwedge^\R_k.\]
 Here
 $G_{\bar{0}}  =  T_n \times \R^m$, and
%
 %these are normal subsupergroups, for which hereafter we will adopt standard identifications  $G_{\bar{0}}  \cong T_n \times \R^m  $  and  $ G_{\bar{1}} \cong \bigwedge^\R_k \, $.  On the other hand, in a general Lie supergroup  $ G' $  there exists no canonical analogue of the subsupergroup  $G_{\bar{0}}$  that is a natural counterpart of  $ G'_\zero \, $:  indeed  $ G_{\bar{1}}$  is a specific peculiarity of the case under study.
$ G_{\bar{1}} = \bigwedge^\R_k $  is the spectrum
%%%%%%%%%
(in the sense of  \cite[Ch.10]{ccf})
%%%%%%%%%
%%%%%%%%%
 of  $\Lambda_\R^k( \xi )  := \Lambda_\R(\xi_1\,,\dots,\xi_k)  $, the real Grassmann algebra in  $ k $  generators
 $ \xi_1 ,..., \xi_k $.

   Both $G_{\bar{0}}  $  and  $ G_{\bar{1}} $  are connected Abelian real Lie supergroups on their own, with  $G_{\bar{0}} $  entirely even and  $G_{\bar{1}} $  entirely odd.
%
%    --- as so are (entirely even and entirely odd, respectively) their tangent Lie superalgebras.
% Indeed, for the latter
%
Since  $ G = G_\zero \times G_\one$  is a direct product of
Lie supergroups,
at the tangent level, $\Lie(G) = \fg = \fg_\zero \oplus \fg_\one $ is a direct sum of Lie superalgebras.
   In the sequel, we denote by  $ M $  the complexification of  $ G \, $:  hence  $ M $  is a complex Lie supergroup, whose tangent Lie superalgebra is  $ \Lie(M) = \fg \otimes \bc$.

% \newpage
\bigskip
   \bigskip

 %%%%%%%%%%%%%%%%%%%%%%%%%%%%%%%%%%%%%%%%%%%%%%%%%%%%%%%%%%%%
%%%%%%%%%%%%%%%%%%%%%%%%%%%%%%%%%%%%%%%%%%%%%%%%%%%%%%%%%%%%%
%%  SECTION 3
%%%%%%%%%%%%%%%%%%%%%%%%%%%%%%%%%%%%%%%%%%%%%%%%%%%%%%%%%%%%%
 %%%%%%%%%%%%%%%%%%%%%%%%%%%%%%%%%%%%%%%%%%%%%%%%%%%%%%%%%%%%

\section{Irreducible unitary representations}
\setcounter{equation}{0}
\label{irredu}

   Let  $ G =  T_n \times \br^m \times {\textstyle \bigwedge}^\br_k $  be the real Abelian
    Lie supergroup with super dimension $ (n+m)|k  $,  where
 $ G_\zero = T_n \times \br^m$ is the product of the real torus $T_n$ and $\br^m$, and
 ${\textstyle \bigwedge}^\br_k $ is the spectrum of the Grassmann algebra
 $  \Lambda_\br(\xi_1,\dots,\xi_k) $ --- cf. \S2.4 above.

                                                                                \par
We introduced the unitary representations in (\ref{cmpt}).
% (see \cite[Def.4.2.1]{ns} for details).
   Let  $  \widehat{G} $  be the set of all irreducible unitary representations of  $ G $,  up to equivalence.
 In this section, we prove  Theorem \ref{dual2},
  which yields isomorphisms
  \[ \wh{G} \,\cong\, \wh{G_\zero}  \times \bz_2 \,\cong\, \bz^n \times \br^m \times \bz_2 .\]
    It identifies $  \wh{G}  $  with the set of pairs  $  (\la_1 , \la_2  , \epsilon)  $,  where  $ \la_1 \in \bz^n $,
   $ \la_2 \in \br^m  $,    and  $ \ep \in \bz_2  $  denotes the parity of the representation space.
   We also provide a realization of the representation space in terms of holomorphic functions on $M_\zero$;
   see  Example \ref{xam}.
Let $\fg = \gz + \go := {\rm Lie}(G)$ be the tangent Lie superalgebra of $G$.

\begin{proposition}  \label{shee}
   If  $ \, V \! $  is a unitary representation of  $ \fg $,  then $\fg_\one$ acts trivially on $V$.
\end{proposition}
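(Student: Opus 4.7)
The plan is to show that the image of every odd $\xi \in \fg_\one$ under the representation $\pi : \fg \lra \fu_B(V)$ vanishes. Fix such a $\xi$ and write $X := \pi(\xi) \in \fu_B(V)$, an odd operator on $V$.

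The first step reduces the statement to the operator identity $X^2 = 0$. Since $\, G = T_n \times \bigwedge_k^\br \,$ is abelian with the Grassmann generators $\xi_1, \dots, \xi_k$ primitive, the tangent Lie superalgebra $\fg$ has vanishing super bracket; in particular $[\xi,\xi] = 0$ in $\fg$. Under $\pi$ the super bracket becomes the super commutator in $\fgl(V)$, and on the odd element $X$ this reads $[X,X] = XX + XX = 2\,X^2$. Hence $2\,X^2 = \pi\bigl([\xi,\xi]\bigr) = 0$ and so $X^2 = 0$ on $V$.

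The second step extracts $X = 0$ from the unitarity axiom (\ref{eq: unit-Lie-superalg}) together with the positivity built into (\ref{abi})(c). For any homogeneous $v \in V$, setting $w := X(v)$ in
$$B\bigl( X(v), w \bigr) \, + \, (-1)^{|X||v|}\, B\bigl( v, X(w) \bigr) \, = \, 0$$
and invoking $X^2 = 0$ kills the right-hand summand, so that $B(X(v), X(v)) = 0$ regardless of the sign $(-1)^{|X||v|}$. Now $X(v)$ is homogeneous of parity $|v| + \one$; by (\ref{abi})(c) one has $B(u,u) \in i^{|u|}\, \br_+$ for every non-zero homogeneous $u$, which forces $X(v) = 0$. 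Running over $v \in V_\zero \cup V_\one$ yields $X \equiv 0$, i.e.\ $\xi$ acts trivially on $V$.

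The argument is essentially sign bookkeeping, and the only substantive point to check is that the abelian super structure of $\fg$ really yields $X^2 = 0$ as an operator on $V$; this rests on the odd--odd convention for the super bracket combined with the primitivity of the Grassmann generators, which together make every super bracket in $\fg$ vanish. Once $X^2 = 0$ is secured, no analytic input is needed: the super Hermitian positivity immediately upgrades $X^2 = 0$ to $X = 0$.
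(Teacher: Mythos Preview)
Your proof is correct and follows essentially the same approach as the paper's: use abelianness of $\fg$ to obtain $X^2=0$, then combine the unitarity identity \eqref{eq: unit-Lie-superalg} with super positivity \eqref{abi}(c) to deduce $X(v)=0$ for every homogeneous $v$. The only cosmetic difference is that the paper phrases the key computation as $B(Av,Av)=(-1)^{|v|}B(A^2v,v)=0$, whereas you substitute $w=X(v)$ directly into the defining relation of $\fu_B(V)$; these are the same manipulation.
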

\begin{proof}
Let $B$ be a super Hermitian metric on $V$, and let
 $  \pi : \fg \relbar\joinrel\lra \fu_B(V)  $  be a Lie superalgebra homomorphism
 (see (\ref{abi}) and (\ref{eqx})).  Let  $ \xi \in \fg_\one  $, and let  $  A = \pi(\xi)  $.
 Then  $ A $  is odd, namely it reverses the parities of  $ V $. Since  $ \fg $  is Abelian,
 we have  $  [\xi,\xi] = 0  $, and so
\beq
  0 = \pi\big([\xi,\xi]\big) \, = \, \big[ \pi(\xi) \, , \pi(\xi) \big] \, = \, [A,A] \, = \, 2 A^2 .
\label{aitt}
\eeq

Let $v \in V_\zero \cup V_\one$ be a homogeneous vector.
We have
\[
\begin{array}{rll}
B(Av,Av) &  = \;\;  (-1)^{|v|} B(A^2v,v)  &  \quad  \mbox{by (\ref{eqx})} \\
&  = \;\;  0 . &  \quad  \mbox{by (\ref{aitt})}
\end{array}
\]
Since $B$ is super positive, this implies that $Av = 0$ for all
$v \in V_\zero \cup V_\one  $,  hence  $ A = 0  $.
Therefore  $ \pi(\xi) = 0 $  for all  $ \xi \in \fg_\one  $,  i.e.  $ \fg_\one $
acts trivially on  $ V $.
\end{proof}

\vskip13pt

\noindent {\it Proof of Theorem \ref{dual2}:}

%%%%%%%%%
%%%%%%%
Let $V$ be a super Hilbert space with metric $B$,
and let $\fu_B(V)$ be the real subalgebra (\ref{eqx})
of $\fg \fl(V)$.
Let $\rho$ be a unitary representation of $G$ on $V$.
It leads to a morphism of Lie superalgebras,
$d\rho : \fg \lra \fu_B(V^\infty)$, where $V^\infty$ are the smooth vectors
(see \S 2.3).
By Proposition \ref{shee},
$ V^\infty $  is a $ G_\zero $-module  equipped with trivial  $ \fg_\one $-action.

Suppose in addition that the $G$-representation on $V$ is irreducible.
We claim that $V^\infty$ is an irreducible $G_\zero$-module.
Assume otherwise, namely $V^\infty$ has a nontrivial
 $ G_\zero $-submodule $W$. Since $\fg_\one$ acts trivially on $V^\infty$,
 this means that $W$ is
 also a nontrivial $G$-subrepresentation of $V$,
 which contradicts the irreducibility of $V$.
Hence $V^\infty$ is an irreducible $ G_\zero $-module as claimed.

 %We know from general representation theory that
 All irreducible unitary representations
 of connected Abelian Lie groups are 1-dimensional.
 So $V^\infty$ is 1-dimensional.
 Since $V^\infty$ is dense in $V$, it implies that $V = V^\infty$.
So every irreducible unitary  $ G $-representation is 1-dimensional.
 Hence
\beq
  \wh{G} = \wh{G_\zero} \times \bz_2 .
  \label{spq}
\eeq

In  \eqref{spq},  the  $ \bz_2 $  component controls the parity.  We have $G_\zero = T_n \times \br^m$, so
the  $ \wh{G_\zero} $  component amounts to the integral weights  $ \la \in \bz^n \times \br^m $,
or equivalently their characters  $\chi_\la  $,  see  \eqref{niw}.
We write  $  V_\la^\ep \in \wh{G}  $  accordingly, where  $ \, \ep \in \bz_2 $.
Its elements  $ v \in V_\la^\ep$  satisfy  $ \, t \cdot v = \chi_\la(t) v $  for all  $  t \in G_\zero  $.

For integral weights  $ \la $  and  $ \mu  $,  their characters satisfy  $ \chi_\la \chi_\mu = \chi_{\la + \mu}  $.
So the tensor product of representations leads to  $ V_\la^\ep \otimes V_\mu^\de = V_{\la + \mu}^{\ep \de}  $, and
 yields a group isomorphism  $  \wh{G} \cong \bz^n \times \br^m \times \bz_2 $.
This proves  Theorem \ref{dual2}.   \hfill  $ \Box $

\vskip9pt

   The following example provides a realization of the representation space associated with  $  (\la\,, \ep) \in \wh{G}  $.

\vskip9pt

\begin{example}  \label{xam}
 {\rm Recall that $M_\zero = \bc^n/i \bz^n \times \bc^m$. Fix $\la \in \bz^n \times \br^m$,
 identified with an integral weight in $i(\bz^n + \br^m)^*$, see (\ref{niw}).
Consider the holomorphic function
 \[ f : M_\zero \lra \bc \;,\; f(z) = e^{\la (z)} = e^{\la(x+iy)} ,\]
 where $z = x+iy$ are the coordinates in (\ref{loc-coord}).
 Let $V_\la = \{c f \;;\; c \in \bc\}$.

 The action of $G_\zero = \br^n/i\bz^n \times \br^m$ on $M_\zero$
 lifts to a $G_\zero$-action on $V_\la$ by
 \[ (r \cdot f)(z) = e^{\la (z+r)} = \chi_\la(r) f(z) \;;\; r \in G_\zero \]
 We can provide a Hermitian metric on $V_\la$ by
 \beq \langle cf , cf \rangle = |c|^2 \label{fmo}
 \eeq
 for all $c \in \bc$. Since $|\chi_\la(r)| = 1$, we have
 \[ \langle r \cdot f , r \cdot f \rangle = \langle \chi_\la(r) f , \chi_\la(r) f \rangle
 = \langle  f ,  f \rangle \]
 so the $G_\zero$-action on $V_\la$ is unitary.
 Depending on whether we declare it to be even or odd,
  $V_\la$ is the member of $\wh{G}$
 parametrized by $(\la, \pm) \in (\bz^n \times \br^m) \times \bz_2$
   \hbox{in Theorem \ref{dual2}.}

Let $\ch(M_\zero)$ be the space of holomorphic functions on $M_\zero$.
As is often the case in harmonic analysis,
instead of the formal Hermitian metric (\ref{fmo}),
one may
construct an $L^2$-structure on $f \in \ch(M_\zero)$ by
$\int_{M_\zero} f \bar{f} \, \mu$, where $\mu$
is a $G_\zero$-invariant measure.
The functions which converge in this $L^2$-structure form
a unitary $G_\zero$-representation on $V_\la$.
We will consider this issue later, from the viewpoint of
geometric quantization.
}
\label{contoh}
\end{example}

% \newpage
\bigskip
   \bigskip

 %%%%%%%%%%%%%%%%%%%%%%%%%%%%%%%%%%%%%%%%%%%%%%%%%%%%%%%%%%%%
%%%%%%%%%%%%%%%%%%%%%%%%%%%%%%%%%%%%%%%%%%%%%%%%%%%%%%%%%%%%%
%%  SECTION 4
%%%%%%%%%%%%%%%%%%%%%%%%%%%%%%%%%%%%%%%%%%%%%%%%%%%%%%%%%%%%%
 %%%%%%%%%%%%%%%%%%%%%%%%%%%%%%%%%%%%%%%%%%%%%%%%%%%%%%%%%%%%

\section{Super K\"ahler structures}
\setcounter{equation}{0}
\label{superkahler}

   In this section, we set up the notions of super symplectic forms and
   super K\"ahler forms (see for instance \cite{vars}),
   together with Hamiltonian actions and moment maps.
   We then apply these notions to our setting and prove Theorem \ref{kah2}.

   Let $M$ be a complex supermanifold.
Let $\Omega^p(M)$ denote the set of all differential $p$-forms of $M$
with complex coefficients.
Let $\Om^{1,0} (M,\bc)$ (resp.  $\Om^{0,1} (M,\bc)$) be
    the complex 1-forms that are eigenvectors of its complex structure with eigenvalue $i$ (resp. $-i$).
    Their exterior product leads to the $(1,1)$-forms $\Om^{1,1} (M,\bc) \subset \Omega^2(M)$.

\begin{definition} {\it Super K\"ahler forms and super symplectic forms}

\noindent {\rm (a)} \, Let $M$ be a complex supermanifold.
A  \textit{super K{\"a}hler form}  on  $ M $  is a closed real $(1,1)$-form
$\omega \in \Omega^{1,1}(M)$ which is super skew-symmetric and positive definite.

\noindent {\rm (b)} \, Let $M$ be a real supermanifold.
   A  \textit{super symplectic form}  on  $ M $  is a closed 2-form
$\omega \in \Omega^2(M)$ which is super skew-symmetric and nondegenerate.

    We also assume that $\om$ is \textit{consistent}, namely  $ \, \omega(X,Y) = 0 \, $  whenever  $ \, |X| \not= |Y| \, $.
\label{dska}
\end{definition}

Here closed means $d \om = 0$, real means $ \overline{\omega} = \omega $,
and super skew-symmetric means $ \omega(X,Y) = -{(-1)}^{|X|\,|Y|} \, \omega(Y,X) $.

   \vskip5pt

In classical geometry,
Hamiltonian actions and moment maps play important roles in
symplectic manifolds $M_\zero$ with actions of Lie groups $G_\zero$ \cite[\S26]{gs}.
In particular, for $z \in \gz$, we define the infinitesimal vector field $z^\sh$ on $M_\zero$ by
\beq
  (z^\sh f)(m)  \, := \,  \bigg( \frac{d}{dt} f\big(e^{tz}.m\big) \!\bigg)\bigg|_{t=0}
  %\;\; ,  \quad f \in C^\infty\big(M \big) \; , \;\; m \in M \; ,
  \label{abyz}
\eeq
for all $f \in C^\infty(M_\zero)$ and $m \in M_\zero$.
We now extend these notions to the super setting.

\smallskip

Let $G$ be a Lie supergroup acting on a symplectic supermanifold  $(M,\om)$.
Given $z \in \fg$, we define
the infinitesimal vector field $z^\sh$ on $M$
as in \cite[Def.8.2.1]{ccf},
namely $z^\sh = \rho_a(z)$ by its notation.
The action of $z^\sh$ can locally be described by (\ref{abyz}) for $f \in C^\infty(M)$ and $m \in M$.
This is
interpreted in the functorial language, i.e. in terms of the functor of points of $G$ and $M$ alike.
The super exponential $(t \mapsto e^{tz})$ has a simpler form for our
Abelian Lie supergroup; see \cite[\S 7.5]{ccf}.
With these assumptions, we let $\imath(z^\sh) \om$ denote the 1-form $\om(z^\sh, \cdot)$.

\begin{definition} {\it Hamiltonian actions and moment maps}

 Let $G$ be a Lie supergroup acting on a symplectic supermanifold  $ (M,\om)  $.
 We say that the action is {\it Hamiltonian} if there exists a map
 $ \Phi : M \lra \fg^* $,  known as the {\it moment map},
 such that

 \noindent {\rm (a)} $\,$ $ d(\Phi, z) = -\imath(z^\sh)\om $  for all  $z \in \fg$,

 \noindent {\rm (b)} $\,$ $\Phi$ is $G$-equivariant.   \label{momm}
\end{definition}

In Definition \ref{momm}(a),
 we treat  $ (\Phi,z) $  as a member of  $ C^\infty(M) $  by  $  m \mapsto (\Phi(m))(z) \in \br $,
 so that $  d(\Phi,z) \in \Om^1(M)  $.
The moment map may not exist, because one needs the closed 1--form  $  \imath(z^\sh)\om  $  to be exact.
So an obstruction is the cohomology  $ H^1(M) $, for example the ordinary 2-torus
acting on itself preserving the invariant volume form has no moment map.

In Definition \ref{momm}(b),
$G$-equivariance means that $\Phi$ intertwines the $G$-action on $M$ with its
coadjoint action on $\fg^*$.
In our case $G$ is Abelian, so it acts trivially on $\fg^*$,
and hence $\Phi$ is $G$-invariant.

\medskip

We now study super K\"ahler geometry and Hamiltonian actions in our setting, so
let  $ \, G = T_n \times \br^m \times \bigwedge^\br_k \, $  be an Abelian Lie supergroup,
and let  $ M $
be its complexification  (\ref{split2}).
   We use the holomorphic coordinates  $  z = x+i\,y  $  and $\zeta = \xi + i \, \eta$ on $M$
   as given in  \eqref{loc-coord},
   where  $ G $  acts on the imaginary part.

    We briefly recall some basic facts on chain complexes.
  A chain complex $C$ is a sequence of linear maps $d : C^n \lra C^{n+1}$
for all $n \geq 0$,
such that $d^2 =0$.
Let $Z^n \subset C^n $ denote the kernel of $d$.
Suppose that $C_a$ and $C_b$ are two chain complexes.
 Then $C = C_a \otimes C_b$
is again a chain complex under  $ \, d \otimes 1 + 1 \otimes d \, $,
where  $ \, C^n = \sum_{i+j=n} C_a^i \otimes C_b^j \, $.
In this case,
$Z^n = {\textstyle \sum_{i+j=n}} Z_a^i \otimes Z_b^j$.
 In particular, for  $ \, n=2 \, $,
\beq
Z^2 = Z_a^0 \otimes Z_b^2 + Z_a^1 \otimes Z_b^1 + Z_a^2 \otimes Z_b^0 .
 \label{pen}
\eeq

We apply the chain complex theory to our setting. Express
the differential forms of our
complex Abelian Lie supergroup $M$ as tensor product of chain complexes,
  \beq
  \begin{array}{l}
   C \;=\; \Om^\bullet (M,\bc)  \; = \;  \Om^\bullet(M_\zero, \bc) \otimes {\textstyle \bigwedge}_{\xi,\,\eta}^\bullet  \;, \\
   C_a^n \;=\; \Om^n (M_\zero,\bc) = \big\{\, {\textstyle \sum_{|I|+|J| =n}} f_{IJ}(x,y) \, dx^I \wedge dy^J \,\big\} ,\\
   C_b^n \;=\; {\textstyle \bigwedge}_{\xi,\,\eta}^n = \big\{\, {\textstyle \sum_{|I|+|J|=n}} f_{IJ}(\xi,\eta) \, d\xi^I \wedge d\eta^J \,\big\},
    \end{array}
    \label{pch}
    \eeq
where $f_{IJ}(x,y)$ are smooth functions on $M_\zero$,
and $f_{IJ}(\xi,\eta)$ are polynomials in which every monomial contains each $\xi_i$ and $\eta_j$ at most once.
Hereafter we will refer to such $f_{IJ}(\xi,\eta)$ as {\it Grassmann polynomial}.
Here $ \, \bigwedge_{\xi,\,\eta}^\bullet \, $  is also a chain complex with exterior derivative  $ d $    \cite[p.234]{cw}.

Let $\om$ be a consistent,  $ G $-invariant  super K\"ahler form on $M$.
So in (\ref{pch}), $\om \in Z^2$.
Since $\om$ is consistent, we can ignore
$Z_a^1 \otimes Z_b^1$ of (\ref{pen}). Also, $Z_a^0 = Z_b^0 = \bc$. So by
(\ref{pen}), we write
\beq
 \om = \om_\zero + \om' \in Z_a^2  +  Z_b^2  .\label{jad}
 \eeq

 We study $\om_\zero$ and $\om'$ of (\ref{jad}) separately.
 We start with $\om_\zero$,
 which is $G_\zero$-invariant K\"ahler form on $M_\zero$.
 Since  $ G_\zero $  has dimension  $ n+m $,
   the indices $p,q,j,k$ below are summed over $1,...,n+m$.
   Here $F = F(x)$, and $F''$ is its Hessian matrix.

\vskip9pt

\begin{proposition}  \label{kf}
A closed $G_\zero$-invariant real $(1,1)$-form on $M_\zero$ can be expressed as
\beq  \om_\zero  \,\; = \;\,  \sum_{p,q} \, \frac{\pa^2 F}{\pa x_p \, \pa x_q} \, dx_p \we dy_q
  \, + \, \sum_{j<k} c_{jk} \big( dx_j \we dx_k + dy_j \we dy_k \big)  ,
  \label{expr}
  \eeq
  where $C = (c_{jk})$ is a skew-symmetric real matrix.
It is K\"ahler if and only if the symmetric matrix
 $\left( \begin{array}{cc}
 F'' & -C \\
 C & F''
 \end{array}
 \right)$
 is positive definite everywhere.
%It has a $G_\zero$-invariant potential function if and only if $C=0$.
\end{proposition}
\begin{proof}
Let $\om_\zero$ be a closed $G_\zero$-invariant real $(1,1)$-form on $M_\zero$.
We write
\beq
 \om_\zero = \sum_{j,k} \phi_{jk} dz_j \we d \bar{z}_k
 = \sum_{j,k} \phi_{jk} (dx_j +i dy_j) \we (dx_k -i dy_k) ,
\label{mux}
 \eeq
where  $ \, \phi_{jk}=\phi_{jk}(x) \, $  by $G_\zero$-invariance.
Let $h_{pq} = -i(\phi_{pq}+ \phi_{qp})$ and $c_{jk} = \phi_{jk}-\phi_{kj}$,
so $h_{pq}= h_{qp}$ and $c_{jk}= -c_{kj}$.
The wedge product is skew-symmetric, so (\ref{mux}) gives
\[
 \om_\zero  = \sum_{p,q} h_{pq} dx_p \we dy_q + \sum_{j<k} c_{jk} (dx_j \we dx_k + dy_j \we dy_k)
  = \om_A + \om_B ,
 \]
where $\om_A$ (resp. $\om_B$) denotes the summands containing $h_{pq}$ (resp. $c_{jk}$).

Since $\om_\zero$ is closed, we have $0 = d \om_\zero = d\om_A + d\om_B$.
One checks that $d\om_A$ is the sum of terms with $dx_r \we dx_p \we dy_q$
whereas $d\om_B$ is the sum of other types of 3-forms,
so there is no nontrivial cancellation between them and we get $d\om_A = d\om_B =0$.

For $\om_A$,
we have $h_{pq}= h_{pq}(x)$ and
\[ 0 = d\om_A = \sum_q d \big( \sum_p h_{pq} \, dx_p \big) \we dy_q .\]
So for each $q$, $\sum_p h_{pq} dx_p$ is a closed 1-form of $\br^{n+m}$.
Hence it is exact because
$\br^{n+m}$ has trivial deRham cohomology.
Namely,  there exists $f_q = f_q(x)$ such that
\[ d f_q = \sum_p h_{pq} \, dx_p . \]
We have $\frac{\pa f_q}{\pa x_p} = h_{pq} = h_{qp} = \frac{\pa f_p}{\pa x_q}$,
so there exists  $  F \in C^\infty(\br^{n+m})  $ such that
 $  \frac{\pa F}{\pa x_q} = f_q  $. It implies that
 $  \frac{\pa^2 F}{\pa x_p \pa x_q} = h_{pq}  $,  so
\[ \om_A = \sum_{p,q} \frac{\pa^2 F}{\pa x_p \pa x_q} dx_p \we dy_q . \]
We have proved that $\om_\zero$ has the expression (\ref{expr}).

For $\om_\zero$ to become K\"ahler, it remains only to check
positivity, namely $\om_\zero (v,Jv) > 0$
for any non-zero tangent vector $v$,
where $J$ is the almost complex structure.
Write
$v = \sum_r (a_r \frac{\partial}{\partial x_r} + b_r \frac{\partial}{\partial y_r})$.
We have $J \frac{\partial}{\partial x_r} = \frac{\partial}{\partial y_r}$ and
$J \frac{\partial}{\partial y_r} = -\frac{\partial}{\partial x_r}$, so
\[
\begin{array}{rcl}
 \om_\zero(v,Jv)
  & = & (\om_A+\om_B) \big(\sum_r (a_r \frac{\partial}{\partial x_r} + b_r \frac{\partial}{\partial y_r}),
\sum_s (a_s \frac{\partial}{\partial y_s} - b_s \frac{\partial}{\partial x_s}) \big) \\
 & = & \om_A \big(\sum_r a_r \frac{\partial}{\partial x_r}, \sum_s a_s \frac{\partial}{\partial y_s} \big)
+ \om_A \big( \sum_r b_r \frac{\partial}{\partial y_r},- \sum_s b_s \frac{\partial}{\partial x_s} \big) \\
& & + \om_B \big(\sum_r a_r \frac{\partial}{\partial x_r}, -\sum_s b_s \frac{\partial}{\partial x_s} \big)
+ \om_B \big(\sum_r b_r \frac{\partial}{\partial y_r}, \sum_s a_s \frac{\partial}{\partial y_s} \big)  \\
& = & F'' (a,a) + F''(b,b) +2C (a,b) \\
& = & \left( \begin{array}{cc}
 F'' & -C \\
 C & F''
 \end{array}
 \right)
  \left( \begin{array}{c} a \\
 b
 \end{array} \right)
 \left( \begin{array}{c} a \\
 b
 \end{array} \right) .
\end{array}
\]

Here $a =(a_r),b=(b_s) \in \br^{n+m}$.
In the last expression,
$\left( \begin{array}{cc}
 F'' & -C \\
 C & F''
 \end{array}
 \right)$
 is a symmetric matrix of size $2(n+m) \times 2(n+m)$
 regarded as a bilinear form on $\br^{2(n+m)}$ (see (\ref{fhua})),
 so we evaluate it on $((a,b),(a,b)) \in \br^{2(n+m)} \times \br^{2(n+m)}$.
 It leads to the positivity of $\om_\zero$
 in the second statement of this proposition.
\end{proof}

%Finally, we consider the existence of a $G_\zero$-invariant potential function for $\om_\zero = \om_A+\om_B$.
%Suppose that $\om_\zero$ has a $G_\zero$-invariant potential function, say $\om_\zero = i \pa \bar{\pa} H$ where $H =H(x)$. Then
%\beq \begin{array}{rl}
%\om_\zero & = i \sum_{j,k} \frac{\pa}{\pa z_j} \frac{\pa}{\pa \bar{z}_k} H dz_j \we d\bar{z}_k \\
%& = \frac{i}{4}\sum_{j,k} \frac{\pa^2 H}{\pa x_j \pa x_k} (dx_j + i dy_j) \we (dx_k -i dy_k) \\
%& = \frac{i}{4}\sum_{j,k} \frac{\pa^2 H}{\pa x_j \pa x_k} (-2i \, dx_j \we dy_k + dx_j \we dx_k + dy_j \we dy_k).
%\end{array}\label{ahii} \eeq
%The skew symmetric properties means that $dx_j \we dx_k$ and $dx_k \we dx_j$ cancel off in pairs,
%and likewise for $dy_j \we dy_k$. So the last expression of (\ref{ahii}) becomes
%\beq \om_\zero = i \pa \bar{\pa} H = \frac{1}{2}\sum_{j,k} \frac{\pa^2 H}{\pa x_j \pa x_k} dx_j \we dy_k .\label{chau} \eeq
%Hence if $\om_\zero$ has a $G_\zero$-invariant potential function, then $\om_B=0$.
%Conversely, if $\om_B=0$, then by (\ref{expr}) and (\ref{chau}), we get $\om_\zero = i \pa \bar{\pa}H$ where $H=2F$.

The above proposition fixes an error in \cite[Thm.1.1]{jga},
which overlooked the component $\sum_{j<k} c_{jk} ( dx_j \we dx_k + dy_j \we dy_k )$.

\medskip

Next we consider $\om' \in Z_b^2$ of (\ref{jad}).
 We state it in the following proposition.

\begin{proposition}  \label{skf}
A closed $G$-invariant real $(1,1)$-form on $M$
has the expression  $\om_\zero + \om'$ with $\om_\zero$ is given by (\ref{expr}), and
\beq
 \om'  \; = \;  \sum\nolimits_{p,q} \, k_{pq} \, \big(\, d\xi_p \, d\xi_q + d\eta_p \, d\eta_q \,\big)
   \, + \, \sum\nolimits_q \, dh_q \, d \eta_q  , \label{hhyy}
   \eeq
   where
  $K = (k_{pq})$ is a symmetric real matrix, $h_q = h_q(\xi)$
  are Grassmann polynomials in $\xi$, and $H= \big( \frac{\pa h_p}{\pa \xi_q} \big)$
  is a skew-symmetric matrix. It is K\"ahler if and only if $\om_\zero$ is a K\"ahler form on $M_\zero$,
  and the symmetric matrix
  $\left( \begin{array}{cc} 2K & -H \\
 H & 2K
 \end{array} \right)$
 is positive definite everywhere.
\end{proposition}
\begin{proof}
By (\ref{jad}), we have $\om_\zero + \om' \in Z_a^2 + Z_b^2$, and
Proposition \ref{kf} has studied $\om_\zero$. So we now consider $\om'$.
Since $M$ has dimension $(n+m)|k$, all summations in this proposition are made over $1,...,k$.
Since $\om'$ is a real $(1,1)$-form,
 we use the coordinates $ \zeta_p = \xi_p + i \eta_p $ of (\ref{loc-coord}) and write
\[
\om' \, = \, {\textstyle \sum_{p,q}} \, f_{pq}(\xi) \, d \zeta_p \, d\bar{\zeta}_q \in Z_b^2 .
\]
Here  $ f_{pq}(\xi) $ is  independent of  $ \eta $  because of $ G $-invariance.  Then
\beq \label{tff}
   \overline{\om'}  \;  =  {\textstyle \sum_{p,q}} \, \bar{f}_{pq} d \bar{\zeta}_p \, d \zeta_q  \, =
   {\textstyle \sum_{p,q}} \, \bar{f}_{pq} \, d \zeta_q \, d \bar{\zeta}_p  \, =  {\textstyle \sum_{p,q}} \, \bar{f}_{qp} \, d \zeta_p \, d \bar{\zeta}_q
\eeq
 where we take into account  $  d \zeta_q \, d \bar{\zeta}_p  =  d \bar{\zeta}_p \, d \zeta_q $
 (as  $ \omega $  is super skew-symmetric,  $ \omega' $  in turn is symmetric).  By  $  \overline{\omega'} = \omega'  $  and  \eqref{tff},  we have
 $  \bar{f}_{qp} = f_{pq}  $ for all  $ p  , q  $.

Let us write  $f_{pq} = k_{pq} + i \, h_{pq}$ as real and imaginary parts;
\beq  \label{spa}
k_{pq} = \frac{1}{2} \big( f_{pq} + \bar{f}_{pq} \big) =
\frac{1}{2} \big( f_{pq} + f_{qp} \big)
\;\;,\;\;
h_{pq}  = - \frac{i}{2} \,\big( f_{pq} - \bar{f}_{pq} \big) =
  \frac{i}{2} \big(- f_{pq} + f_{qp} \big) .
\eeq

We now expand  $  \zeta_s = \xi_s + i\,\eta_s  $, keeping in mind that $ d\eta_p \, d\xi_q =  d\xi_q \, d\eta_p$.
By (\ref{spa}), as well as $k_{pq} = k_{qp}$ and $h_{pq} = - h_{qp}$,
\[
  \begin{array}{rl}
     \om'  &   = \,  \sum_{p,q} f_{pq} \, (d \xi_p + i \, d \eta_p) \, (d \xi_q - i \, d \eta_q)    \\
      &   = \,  \sum_{p,q} ( f_{pq} \, ( d \xi_p \, d \xi_q + d \eta_p \, d \eta_q ) \, + \, i \, f_{pq} \,
           (- d \xi_p \, d \eta_q + d \eta_p \, d \xi_q  ) )   \\
            &   = \, \frac{1}{2} \sum_{p,q} ( f_{pq} + f_{qp} ) \,  ( d \xi_p \, d \xi_q + d \eta_p \, d \eta_q )
            - \frac{i}{2} \, \sum_{p,q} ( f_{pq} - f_{qp} ) \, ( d \xi_p \, d \eta_q - d \xi_q \, d \eta_p ) \\
& =  {\textstyle \sum_{p,q}}\, k_{pq} \, ( d \xi_p \, d \xi_q + d \eta_p \, d \eta_q ) \, +
\, {\textstyle \sum_{p,q}}\, h_{pq} \, ( d \xi_p \, d \eta_q - d \xi_q \, d \eta_p ) \\
   &  =  {\textstyle \sum_{p,q}}\, k_{pq} \, ( d \xi_p \, d \xi_q + d \eta_p \, d \eta_q ) \, +
   \,2 \, {\textstyle \sum_{p,q}}\, h_{pq} \, d \xi_p \, d \eta_q \\
   & = \om_X + \om_Y ,
   \end{array}
   \]
where $\om_X$ and $\om_Y$ are the terms containing $k_{pq}$ and $h_{pq}$ respectively.

We add the condition $\om' \in Z_b^2$
and get $0 = d \om' = d\om_X + d \om_Y$.
Here $d\om_X$ (resp.\ $d\om_Y$) contains the terms with  $  d \xi_r \, d\xi_p \, d\xi_q + d \xi_r \, d\eta_p \, d\eta_q  $
(resp.  $  d\xi_r \, d\xi_p \, d\eta_q - d\xi_r \, d\xi_q \, d\eta_p $),
so there is no nontrivial cancellation between them.
Hence $  d\om_X = d\om_Y = 0  $.

For  $ \om_X \, = \, {\textstyle \sum_{p,q}} k_{pq} \, ( d \xi_p \, d \xi_q + d \eta_p \, d \eta_q ) $,
the fact that  $  d\om_X = 0  $  implies that  $ \, k_{pq} \in \br \, $  are all constants, i.e.
   $\om_X =   d (\, {\textstyle \sum_{p,q}} k_{pq} \, ( \xi_p \, d \xi_q + \eta_p \, d \eta_q ) )$ is exact.

  Now we look at  $ \; \om_Y  = 2 \, {\textstyle \sum_{p,q}}\, h_{pq} \, d \xi_p \, d \eta_q  \; $.
 The condition  $ \, d\omega_Y = 0 \, $  implies that
 $\frac{\pa}{\pa \xi_r} (h_{pq}) + \frac{\pa}{\pa \xi_p} (h_{rq}) =0$, so by setting $p=r$, we get
 \beq
  \frac{\pa}{\pa \xi_p} (h_{pq}) =0 , \label{ahdo}
  \eeq
 namely $h_{pq}$ is independent of $\xi_p$.
 Let $h_q = \sum_p h_{pq} \, \xi_p$. By (\ref{ahdo}), $\frac{\pa h_q}{\pa \xi_p} = h_{pq}$, so
 \[ \sum_q dh_q \, d\eta_q = \sum_q \big( \sum_p \frac{\pa h_q}{\pa \xi_p} d\xi_p \big) \, d \eta_q
 = \sum_{p,q} h_{pq} \, d \xi_p \, d \eta_q = \om_Y .\]
 We have shown that $\om'$ has the expression (\ref{hhyy}).

 It remains to study the positivity of $\om'$. We
 consider the $k \times k$ matrices
 \[ K = (k_{pq}) \;\;,\;\; H = (h_{qp}) = \Big( \frac{\pa h_p}{\pa \xi_q} \Big) .\]
 Note that by (\ref{spa}), $K$ is symmetric and $H$ is skew-symmetric.
 We identify them with bilinear forms on $\br^k$ via (\ref{fhua}).
 Let $v = \sum_r (a_r \frac{\pa}{\pa \xi_r} + b_r \frac{\pa}{\pa \eta_r})$, so $a= (a_r), b=(b_s) \in \br^k$.
 We have
 \[
 \begin{array}{rcl}
 \om' (v,v)
 &  = & (\om_X + \om_Y)
 \big( \sum_r (a_r \frac{\pa}{\pa \xi_r} + b_r \frac{\pa}{\pa \eta_r}) ,
 \sum_s (a_s \frac{\pa}{\pa \xi_s} + b_s \frac{\pa}{\pa \eta_s}) \big) \\
 & = & \om_X \big( \sum_r a_r \frac{\pa}{\pa \xi_r} , \sum_s a_s \frac{\pa}{\pa \xi_s} \big)
 + \om_X \big( \sum_r b_r \frac{\pa}{\pa \eta_r} , \sum_s b_s \frac{\pa}{\pa \eta_s} \big) \\
 & & + \, 2 \, \om_Y \big( \sum_r a_r \frac{\pa}{\pa \xi_r} , \sum_s b_s \frac{\pa}{\pa \eta_s} \big) \\
 & = &  2K(a,a) + 2K(b,b) + 2H(a,b) \\
 & = &   \left( \begin{array}{cc} 2K & -H \\
 H & 2K
 \end{array} \right)
 \left( \begin{array}{c} a \\
 b
 \end{array} \right)
 \left( \begin{array}{c} a \\
 b
 \end{array} \right) .
   \end{array}
   \]

In the last expression, $\left( \begin{array}{cc} 2K & -H \\ H & 2K
  \end{array} \right)$
  is a $2k \times 2k$ symmetric matrix, regarded as a bilinear form on $\br^{2k}$.
  So it is positive definite if and only if $\om' (v,v) > 0$ for all $v \neq 0$.
  This proves the proposition.
    \end{proof}

\medskip

We now consider the theory of Hamiltonian group action, discussed in Definition \ref{momm}.
%in our setting, where $G$ is an Abelian Lie supergroup
%acting on its complexification $M$.
We write $\om = \om_\zero + \om'$ according to
Propositions \ref{kf} and \ref{skf}.

\begin{proposition}
Let $\om = \om_\zero + \om'$ be a $G$-invariant super K\"ahler form on $M$.
The $G$-action is Hamiltonian if and only if $C=K=0$.
In this case its moment map is
$\Phi(x,y,\xi,\eta) =
\big( \, \frac{\pa F}{\pa x_p} \,,\, -h_q(\xi) \, \big)$.
\label{phim}
\end{proposition}
\begin{proof}
We use the coordinates  $ (x,y,\xi,\eta) $  in  \eqref{loc-coord}.
   Let  $  u+v \in \gz + \go  $.  Its associated infinitesimal vector field on  $ M $  is
\beq
  u^\sh + v^\sh  \; = \;  \sum_q u_q \, \frac{\pa}{\pa y_q} \, + \, \sum_s v_s \, \frac{\pa}{\pa \eta_s}   \;. \label{tesi}
\eeq
By (\ref{expr}) and (\ref{tesi}), we have
\beq
 \imath(u^\sh) \, \om_\zero = - \sum_{p,q} u_q \, \frac{\pa^2 F}{\pa x_p \pa x_q} \, dx_p \, +
\sum_{j<k} c_{jk} (u_j  \, dy_k - u_k \, dy_j) .\label{angg}
\eeq
 By (\ref{hhyy}) and (\ref{tesi}),  we have
\beq
  \imath(v^\sh) \, \om'  =
 2 \sum_{p,q} k_{pq} v_p \, d \eta_q + \sum_q v_q \, dh_q ,  \label{angk}
\eeq

We want to define the moment map $\Phi : M \lra \fg^*$.
By (\ref{angg}) and (\ref{angk}),
up to addition by a vector, a necessary condition to satisfy Definition \ref{momm}(a) is
\beq
 \Phi(x,y,\xi,\eta) =
\Big( \frac{\pa F}{\pa x_p} - 2 \sum_j c_{jp} y_p \,,\,
-h_q(\xi) - 2 \sum_j k_{jq} \eta_q \Big) \in \fg^* . \label{nowe}
\eeq
Here ``up to addition by a vector'' means that if $\Phi$
satisfies Definition \ref{momm}(a), then so does
$\Phi' (r) := \Phi(r)+ \la$
for all $\la \in \fg^*$, as the constant term vanishes when we apply $d$ to it.
In (\ref{nowe}),
we identify $ \fg^* \cong \br^{(n+m)|k}$, so the even variables have
indices $p=1,...,n+m$, and
the odd variables have indices $q = 1,...,k$.

The variables of $G$ are $(y,\eta)$, and in particular
$y_1,...,y_n$ are the local (but not global) coordinates of $T_n$.
Suppose that $c_{jp} \neq 0$ for some $p=1,...,n$.
Then (\ref{nowe}) is not well-defined, due to the presence of $y_p$.
It implies that $\om$ does not satisfy Definition \ref{momm}(a).

Suppose now that $c_{jp}=0$ for all $p=1,...,n$
(which includes the case where $T_n$ is the trivial group),
so that (\ref{nowe}) is well-defined.
If $c_{jp} \neq 0$ for any $p=n+1,...,n+m$,
or some $k_{jq} \neq 0$,
then (\ref{nowe}) is not $G$-invariant because it contains the $G$-variable
$y_p$ or $\eta_q$.
Consequently, $\om$ does not satisfy Definition \ref{momm}(b).
We have shown that if $C \neq 0$ or $K \neq 0$,
then the action is not Hamiltonian.

Conversely, if $C = K = 0$, then $\Phi$ of (\ref{nowe}) is well-defined and
does not contain the $G$-variables $y$ and $\eta$.
So $\Phi$ satisfies Definition \ref{momm}, and the action is Hamiltonian.
\end{proof}

\sk

\noindent {\it Proof of Theorem \ref{kah2}:}

Theorem \ref{kah2}(a,b) follow from  Propositions \ref{kf} and \ref{skf}.
Theorem \ref{kah2}(c) follows from Proposition \ref{phim}.\hfill  $ \Box $

% \newpage
\bigskip
   \bigskip

%%%%%%%%%%%%%%%%%%%%%%%%%%%%%%%%%%%%%%%%%%%%%%%%%%%%%%%%%%%%%%%%%%
%%%%%%%%%%%%%%%%%%%%%%%%%%%%%%%%%%%%%%%%%%%%%%%%%%%%%%%
%%%%%%%%%%%%%%%%%%%%%%%%%%%%%%%%

\section{Geometric quantization}
\setcounter{equation}{0}
\label{geomquan}

In this section, we prove  Theorem \ref{occ2}.
Fix a $G$-invariant super K\"ahler form $\om$ on  $ M $, where the $G$-action is Hamiltonian with moment map $\Phi$.
By  Theorem \ref{kah2},
\beq \om = d \sum_q \Big( \frac{\pa F}{\pa x_q} \, dy_q + h_q \, d \eta_q \Big) , \label{bbzy}
\eeq
so it is exact.
The positive definite matrices in Theorem \ref{kah2} become
$\left( \begin{array}{cc}
F'' & 0 \\
0 & F''
\end{array} \right)$
and
$\left( \begin{array}{cc}
0 & -H \\
H & 0
\end{array} \right)$.
So in particular $F''$ is positive definite,
namely $F$ is strictly convex.
   We first recall geometric quantization in the ordinary setting  (cf. \cite{ko}).  There exists a line bundle $ \bl_\zero $  on  $ M_\zero $  whose Chern class is the cohomology class  $ \big[\om_\zero\big] \, $.  Since  $ \om_\zero $  is exact, we have  $  \big[\om_\zero\big] =0  $,  so the bundle  $ \bl_\zero $  is topologically trivial; yet it has interesting geometry given by a connection  $ \nabla $
 with curvature
 $ \om_\zero $.  Let $ C^\infty\big(\bl_\zero\big) $  be the set of all smooth sections of  $ \bl_\zero $. We define the set of
   \hbox{all holomorphic sections by}
  \[  \cH(\bl_\zero)  \; := \;  \{ s \in C^\infty\big(\bl_\zero\big) \;;\;
\nabla_v s = 0  \mbox{ \ for all anti-holomorphic vector fields\ } v \} . \]

The  $ G_\zero $-action on $M_\zero$ lifts to a  $ G_\zero $--representation  on  $ \cH(\bl_\zero) $.  The line bundle has an invariant Hermitian form, namely  $  (s,t) \in C^\infty(M_\zero)  $  for all sections $ s , t \in C^\infty(\bl_\zero) $,  and  $  v(s,t) = (\nabla_v{s}\,,t) + (s\,,\nabla_v{t}) $ for all vector fields  $ v $.

   We now extend this construction to the super setting.  Recall from  \eqref{loc-coord}  that the holomorphic variables are
  $z_p   =   x_p + i\,y_p$ for $p=1,...,n+m$, and $\ze_q  =  \xi_q + i\,\eta_q$
for $q = 1, ...,k$.
Hereafter we use standard multi-index notation, for example
  \[  \ze_{P,Q}   :=   \ze_{p_1} \cdots \ze_{p_r} \, \bar{\ze}_{q_1} \cdots \bar{\ze}_{q_s}   \qquad  \forall \;\; P = (p_1,...,p_r) \, , \; Q=(q_1,...,q_s)  . \]
 The odd variables anti-commute, so we use ascending indices  $  p_1 < \cdots < p_r  $  and  $  q_1 < \cdots < q_s $  so that these
 $ \ze_{P,Q} $  are linearly independent.

   We let the subscript ``top'' denote the presence of all odd variables, so that
  \[  \ze_{\rm top}  =   \ze_1 \cdots \ze_k \, \bar{\ze}_1 \cdots \bar{\ze_k}. \]
 We define the star operator  $ \ze_{P,Q} \mapsto \ze_{P,Q}^*  $,  where  $ \ze_{P,Q}^* $  consists of
%%%%%
% the remaining odd variables,
%%%%%
the odd variables missing from  $ \ze_{P,Q} $,
%%%
 oriented and normalized so that
\beq
  \ze_{P,Q} \, \ze_{P,Q}^*   =   i^{|P|+|Q|} \, \ze_{\rm top} .  \label{abig}
\eeq
 Here  $  i^{|P|+|Q|} \in \{1,i\,\} $,  following  \eqref{ico}.  For  $  Q = \emptyset  $,
 we also write  $ \ze_P = \ze_{p_1} \cdots \ze_{p_r} $  and $  \ze_P \, \ze_P^*  =  i^{|P|} \, \ze_{\rm top} $.
                                                                         \par
   Consider now  $  \sum_{P,Q} f_{P,Q}  \ze_{P,Q} \in
  C^\infty(M) $.
  The Berezin integration  (see \cite[\S4.6]{va})  keeps only the top monomial, namely
\beq
  \int_M \sum\nolimits_{P,Q} f_{P,Q} \, \ze_{P,Q} \, d\bere  \,\; := \;\,  \int_{M_\zero} f_{\rm top} \, dx \, dy \label{berez}
\eeq
 where  $  dx \, dy  $  is the Haar measure of  $ M_\zero $.  The line bundle  $ \bl_\zero $
 extends to a trivial super line bundle  $ \bl $  over  $ M $,  whose smooth sections form the set
  \[
 C^\infty(\bl)
  =  C^\infty(\bl_\zero) \otimes
{\textstyle \bigwedge}_\br ( \zeta_1,\dots,\zeta_k,\bar{\zeta}_1,\dots,\bar{\zeta}_k )  \]
 consisting of linear combinations of  $  s_\zero\,\ze_{P,Q} $  where  $ s_\zero \in C^\infty(\bl_\zero) $.
 We construct an  $ L^2 $-structure  on $ C^\infty(\bl) $
 by
\beq
 \hskip-7pt   \langle s_\zero\,\ze_{P,Q} \, , t_\zero\,\ze_{R,S} \rangle  \; =  \int_M ( s_\zero \, , t_\zero ) \,
\ze_{P,Q} \, \ze_{R,S}^* \, d \bere  \; ,   \!\quad  \forall \;\; s_\zero\,\ze_{P,Q} \, , \, t_\zero\,\ze_{R,S} \, \in \,
 C^\infty (\bl) .
  \label{elt}
\eeq

      The space of holomorphic sections of  $ \bl $  is given by
\beq
  \cH(\bl)  \,\; := \;\,  \big\{\, \sum\nolimits_P s_P\,\ze_P \;;\; s_P \mbox{ is a holomorphic section of } \, \bl_\zero \,\big\} \;. \label{taad}
\eeq
 In  \eqref{taad},  we write  $ \ze_P $  instead of  $ \ze_{P,Q} $  because the anti-holomorphic odd variables do not appear.
%%%%%
% Then we define the set of square integrable holomorphic sections, namely
%%%%%
 The set of square integrable holomorphic sections
   \hbox{is defined by}
  \[  \cH^2(\bl)  \; := \;  \big\{ s \in \cH(\bl) \;;\; \langle s \, , s \rangle \mbox{ converges} \big\} .  \]

\begin{proposition}
 $ \cH^2(\bl) $  is a super Hilbert space.   \label{suhe}
\end{proposition}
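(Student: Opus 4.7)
The plan is to exploit the explicit direct sum decomposition of $\cH(\bl)$ dictated by the odd Grassmann variables, reducing the verification to bookkeeping on a single classical bundle. Concretely, from (\ref{taad}) we have the finite direct sum
\[
  \cH(\bl) \;=\; \bigoplus_{P \subseteq \{1,\dots,k\}} \cH(\bl_\zero)\,\ze_P,
\]
and the $\bz_2$--grading is inherited by declaring $\cH(\bl_\zero)\,\ze_P$ to be of parity $|P|\bmod 2$. I will define $\cH^2(\bl_\zero)$ in the classical sense (square-integrable holomorphic sections of $\bl_\zero$ with respect to the Hermitian pairing $(\cdot,\cdot)$ and the Haar measure $dx\,dy$), which is a Hilbert space by standard Bergman theory on the complexified torus.

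Next I would compute the pairing (\ref{elt}) on the generators: since the anti-holomorphic Grassmann variables are absent in $\cH(\bl)$, the product $\ze_P\,\ze_R^*$ contains $\ze_{\rm top}$ precisely when $P=R$ (otherwise some odd variable is missing and the Berezin integral (\ref{berez}) vanishes), and in that case (\ref{abig}) gives $\ze_P\,\ze_P^* = i^{|P|}\,\ze_{\rm top}$. Hence
\[
  \bigl\langle s_\zero\,\ze_P \, , \, t_\zero\,\ze_R \bigr\rangle \;=\; \delta_{P,R}\; i^{|P|} \int_{M_\zero} (s_\zero , t_\zero)\, dx\,dy .
\]
This formula immediately yields consistency in the sense of (\ref{abi})\emph{(a)}, because pairings of elements of different parity vanish; the different $\cH(\bl_\zero)\,\ze_P$ are mutually orthogonal.

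I would then verify the two remaining axioms of a super Hermitian metric by a short case analysis on $|P|\bmod 2$, exploiting the convention (\ref{ico}) that $i^{|P|}\in\{1,i\}$. For $|P|$ even one has $i^{|P|}=1$, so super Hermitian symmetry reduces to the usual Hermitian symmetry of the classical pairing on $\cH(\bl_\zero)$; for $|P|$ odd one has $i^{|P|}=i$ with $\overline{i}=-i$ and the sign $(-1)^{|P|^2}=-1$ restores the equality, confirming (\ref{abi})\emph{(b)}. Super positivity (\ref{abi})\emph{(c)} is then immediate since $\int_{M_\zero}(s_\zero,s_\zero)\,dx\,dy\in\br_+$, giving values in $\br_+$ on the even part and in $i\,\br_+$ on the odd part.

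For completeness, the associated ordinary inner product $B|_{V_\zero}\oplus(-iB)|_{V_\one}$ restricts on each summand $\cH(\bl_\zero)\,\ze_P$ to the classical Bergman inner product $(s_\zero,t_\zero)\mapsto\int_{M_\zero}(s_\zero,t_\zero)\,dx\,dy$ (the factor $-i$ precisely cancels the factor $i$ appearing for odd $|P|$). Therefore
\[
  \cH^2(\bl) \;=\; \bigoplus_{P \subseteq \{1,\dots,k\}} \cH^2(\bl_\zero)\,\ze_P ,
\]
a finite orthogonal direct sum of Hilbert spaces, hence itself a Hilbert space. The only step that is anything more than bookkeeping is tracking the signs and factors of $i$ produced by (\ref{abig}) and by the convention (\ref{ico}); I expect no genuine obstacle once the pairing is put in diagonal form with respect to the monomial basis $\{\ze_P\}$.
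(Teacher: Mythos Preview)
Your proof is correct and follows essentially the same path as the paper's: compute the pairing explicitly via (\ref{abig})--(\ref{berez}) to obtain the diagonal formula $\langle s_\zero\,\ze_P, t_\zero\,\ze_R\rangle = \delta_{P,R}\, i^{|P|}\!\int_{M_\zero}(s_\zero,t_\zero)\,dx\,dy$, then check (\ref{abi})\textit{(a)}--\textit{(c)} by the same sign bookkeeping. The only minor difference is in the completeness step: the paper argues that $\cH^2(\bl)=L^2(\bl)\cap\cH(\bl)$ is closed in the super Hilbert space $L^2(\bl)$, whereas you use the finite orthogonal decomposition $\bigoplus_P \cH^2(\bl_\zero)\,\ze_P$ directly---both routes are standard, and yours is arguably more transparent here.
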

\begin{proof}
 Let  $  s = s_\zero\,\ze_{P,Q}  $  and  $ \, t= t_\zero\,\ze_{R,S}  $.  By  \eqref{abig},  \eqref{berez}  and \eqref{elt},  we have
\beq
   \langle s , t \rangle  \,\; = \;\,  i^{|s|} \, \de_{P,R} \, \de_{Q,\,S} \, \int_{M_\zero} \!\!\! ( s_\zero \, , t_\zero ) \, dx \, dy   \label{obee}
\eeq
 where  $ \de_{P,R} $  and  $ \de_{Q,\,S} $  are Kronecker deltas.
 We want to show that this is a super Hermitian metric on the elements that converge, namely it satisfies  \eqref{abi}.

   If  $ s $  and  $ t $ have different parities, then  $ \de_{P,R} \, \de_{Q,\,S} = 0 $,  and so $ \langle s  , t \rangle = 0 $;
   this implies the consistency property  \eqref{abi}(a).
                                                      \par
   Next we check the super Hermitian symmetric property  \eqref{abi}(b).  Here  $ \, \de_{P,R} \, \de_{Q,\,S} = 1 \,$
   only if  $ s $  and  $ t $  have the same parity, and in that case  $ {(-1)}^{|s| \cdot|t|} = {(-1)}^{|s|} $.  Hence
\beq
   {(-1)}^{|s| \cdot|t|} \, \overline{i^{|s|}} \, \de_{P,R} \, \de_{Q,\,S}  \; = \;  i^{|s|} \, \de_{P,R} \, \de_{Q,\,S}  \;.  \label{chib}
\eeq
 By  \eqref{obee}  and  \eqref{chib},  we have
  $$  \begin{array}{rl}
     {(-1)}^{|s| \cdot|t|} \, \overline{\langle t \, , s \rangle}  &
      = \;\,  {(-1)}^{|s| \cdot|t|} \, \overline{i^{|s|}} \, \de_{P,R} \, \de_{Q,\,S} \, \overline{{\textstyle \int_{M_\zero}} ( t_\zero \, , s_\zero ) \, dx \, dy}  \\
     & = \;\,  i^{|s|} \, \de_{P,R} \, \de_{Q,\,S} \, {\textstyle \int_{M_\zero}} ( s_\zero \, , t_\zero ) \, dx \, dy  \,\; = \;\,  \langle s \, , t \rangle  \;.
     \end{array}
       $$
 This proves  \eqref{abi}(b).

   Finally, we check for super positivity  \eqref{abi}(c).  Let  $  s \neq 0  $.  By  \eqref{obee},  if  $ s $  is even (resp. odd),
   then  $ \langle s  , s \rangle \in \br^+  $  (resp. $  \langle s  , s \rangle \in i\,\br^+  $).
   This proves  \eqref{abi}(c).  Thus we have shown that the  $ L^2 $-structure  \eqref{obee}  is indeed a super Hermitian metric on the elements that converge.                                                                  \par
   As ordinary vector spaces, both
  $ {C^\infty(\bl)}_\zero $  and  $ {C^\infty(\bl)}_\one $
  have ordinary inner products  $  \langle \,\cdot\, , \,\cdot\, \rangle $  and  $  -i \, \langle \,\cdot\, , \,\cdot\, \rangle  $  respectively.
  This induces a metric space structure  $ C^\infty(\bl) \, $,  so its completion $ L^2(\bl) $  is a super Hilbert space.  The Bergman space
  $ \cH^2(\bl) = L^2(\bl) \cap \cH(\bl) $  is a closed  subspace of  $ L^2(\bl) $,  so  $ \cH^2(\bl) $  is complete as well,
  and hence it is a super Hilbert space.
\end{proof}

\vskip9pt

   We shall study  $ \cH(\bl) $  and  $ \cH^2(\bl) $  under the  $ G $-action.
   However, it is more convenient to work with functions than sections; to this end, the next proposition provides a global trivialization.
   As before,  $ F $ is the function in Theorem \ref{kah2}.

\vskip11pt

\begin{proposition}
 There exists a nowhere vanishing  $ G $-invariant  section  $ u \in \cH(\bl) $  such that  $ (u,u) = e^{-\F} $.   \label{nonv}
\end{proposition}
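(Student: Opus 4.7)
The plan is to build $u$ within the bosonic sub-bundle $\bl_\zero$ via the standard Chern-connection construction, and then view it as a section of $\bl = \bl_\zero \otimes \bigwedge_\br(\zeta_1,\dots,\zeta_k,\bar\zeta_1,\dots,\bar\zeta_k)$ with trivial dependence on the odd variables. Since $\om_\zero$ is exact, the prequantum line bundle $\bl_\zero$ has vanishing Chern class and is topologically trivial on $M_\zero$. A global trivialization descends from the universal cover $\bc^n$ of $M_\zero = \bc^n/i\bz^n$ because the Kähler potential $F$ depends only on $x$ and is therefore $i\bz^n$--periodic.

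Next, fix the Chern connection compatible with the Hermitian metric $h = e^{-F}$ and with the holomorphic structure: in the trivialization above, its connection $1$--form is $A = -\pa F$, of pure type $(1,0)$, and its curvature $\pa\bp F$ agrees with $\om_\zero$ up to the prequantization-convention constant, which we absorb into an overall rescaling. Let $u_\zero$ be the corresponding trivializing section. Then $u_\zero$ is nowhere vanishing; it lies in $\ch(\bl_\zero)$ because $A^{0,1} = 0$ forces $\nabla_v u_\zero = 0$ for every anti-holomorphic vector field $v$; metric compatibility $d\log(u_\zero,u_\zero) = A + \bar A = -dF$ gives $(u_\zero, u_\zero) = e^{-F}$ once the normalization is fixed; and $u_\zero$ is $G_\zero$--invariant, since $F$ and $A$ have no $y$--dependence.

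Finally, set $u := u_\zero \otimes 1 \in \ch(\bl)$. Then $u$ is nowhere vanishing; it belongs to $\ch(\bl)$ by (\ref{taad}) since $u_\zero$ is holomorphic and $u$ carries no $\bar\zeta$; it is annihilated by every $\pa/\pa\xi_r \in \fg_\one$ because it is $\xi$--independent, so together with $G_\zero$--invariance this yields full $G$--invariance; and $(u,u) = (u_\zero,u_\zero) = e^{-F}$, because the Hermitian form on $\bl$ restricts to that on $\bl_\zero$ on sections with no odd content.

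The main subtlety is the first step, namely descent of the global trivialization from $\bc^n$ down to $M_\zero$; this rests on the $T_n$--invariance of $F$ dictated by Theorem~\ref{kah}. A secondary but genuinely technical point is tracking the Chern-connection normalization carefully so that $(u,u) = e^{-F}$ holds on the nose, rather than up to a positive scalar, which is the one place where convention-dependent constants must be absorbed into the definition of $u$.
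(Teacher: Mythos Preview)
Your approach is essentially the paper's: construct the trivializing section on $\bl_\zero$ in the classical setting, then extend it to $\bl$ as $u_\zero \otimes 1$, noting that independence from the $\xi_r$'s gives $\partial_\theta u = 0$ for all $\theta \in \fg_\one$ and hence full $G$--invariance. The paper simply cites \cite[(3.13)]{jga} for the classical step, whereas you sketch it via the Chern connection; both are fine.

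One correction: in this paper the macro $\F$ expands to $2F$, so the target is $(u,u) = e^{-2F}$, not $e^{-F}$. The paper flags this explicitly in its proof (``we see $e^{-F}$ in \cite{jga} because its $\om_\zero$ is half of ours''). In your construction this just means taking the Hermitian metric to be $h = e^{-2F}$; indeed, as the proof of Theorem~\ref{kah2} shows, the actual potential of $\om_\zero$ in the paper's normalization is $2F$, so your curvature/normalization bookkeeping should be run with $2F$ throughout rather than $F$.
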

\begin{proof}
 In the ordinary setting, there exists a nowhere vanishing  $ G_\zero $-invariant  section  $  u \in \cH(\bl_\zero)  $  such that  $ (u,u) = e^{-\F} $ (see  \cite[(3.13)]{jga}; it has  $ e^{-F} $  because its  $ \om_\zero $  is half of ours).  It extends naturally to a holomorphic section of  $ \bl $,  where  $ u $  is independent of odd variables.  So for all  $ \xi \in \go  $,  we have  $ \frac{\pa}{\partial \xi} u = 0 $,  and this together with the  $ G_\zero $-invariance guarantees that  $ u $  is  $ G $-invariant.
\end{proof}

\vskip9pt

   We consider the set of all holomorphic functions on  $ M $,  namely
  \[  \cH (M)  \; := \;
  \{\, {\textstyle \sum\limits_P} \, f_P \, \ze_P \;;\; f_P \in \cH(M_\zero) \,\; \forall P \,\}  .  \label{smo}  \]
 The section  $ u $  of  Proposition \ref{nonv}  is holomorphic and nowhere vanishing, so it leads to a  $ G $-equivariant  global trivialization
\beq  \label{isom-hol}
  \cH(\bl) \, \cong \, \cH(M) \;\; ,  \quad  f u \mapsto f
\eeq
 that allows us to study  $ \cH^2(\bl) $  in terms of holomorphic functions.

   Extend the star operator of  \eqref{abig}  to
%%%%%%%%%
 $ C^\infty(M) $
%%%%%%%%%
%%%%%%%%%
  by
\beq
   * : C^\infty(M) \lra C^\infty(M) \;\; ,  \quad
   ( f_\zero \, \ze_{P,Q} ) {( f_\zero \, \ze_{P,Q} )}^*  \; =
   \;  i^{|P| + |Q|} \, f_\zero \, \bar{f}_\zero \, \ze_{\rm top}  \;.  \label{starr}
\eeq
 Then define an  $ L^2 $-structure  on
%%%%%%%%%
 $ C^\infty(M) $
%%%%%%%%%
%%%%%%%%%
  by
\beq
  \langle f\,,h \rangle  \; :=
  \;  \int_M f \, h^* \, e^{-\F} \, d \bere  \quad \qquad  \forall \; f , h \in
%%%%%%%%%
 C^\infty(M)
%%%%%%%%%
%%%%%%%%%
\;.  \label{elto}
\eeq
 Using arguments similar to those applied for  Proposition \ref{suhe},
 one sees that this is a super Hermitian metric on the elements which converge, hence we let  $ \, L^2( M , e^{-\F} ) $  denote its completion.  Then we define the Bergman space
 \[  \cH^2( M , e^{-\F})  \; := \;  L^2( M , e^{-\F}) \cap \cH(M) \;.  \]
 Note that we have inserted the weight  $ e^{-\F} $  in  \eqref{elto}  because, by  Proposition \ref{nonv},
 the  $ L^2 $-structures  of  \eqref{elt}  and  \eqref{elto}  are related by
\beq
  \langle f u \, , h u \rangle  \,\; = \;  \int_M f \, h^* (u\,,u) \, d \bere  \,\; = \;  \int_M f \, h^* \, e^{-\F} \, d \bere  \,\; = \;  \langle f , h \rangle   \label{ponx}
\eeq
%%%%%%%%%
% for all $fu, hu \in C^\infty(\bl)$.
%%%%%
 for all  $ \, f , h \in C^\infty(M) \, $.
%%%%%%%%%
%%%%%%%%%
 So we have an isomorphism of  $ G $-modules  and super Hilbert spaces, namely
\beq  \label{isome}
  \cH^2(\bl)  \; \cong \;  \cH^2\big( M , e^{-\F} \,\big)  \;\; ,  \quad  f\,u \mapsto f \;\; .
 \eeq

   We consider now whether the $G$--module $ \cH^2(M , e^{-\F}) $  is unitary, in the sense of
    (\ref{cmpt}).
    The next proposition shows that this is not the case, and characterizes its largest subrepresentation which is unitary.

\vskip7pt

\begin{proposition}  \label{maxu}
 The subspace  $  \cH^2( M_\zero , e^{-\F} )  $  is the largest  $  G $-subrepresentation  of  $  \cH^2(M , e^{-\F} )  $  which is a unitary  $ G $-representation.
\end{proposition}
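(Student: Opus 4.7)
The plan is to use Proposition \ref{shee}, which forces $\fg_\one$ to act trivially on any unitary representation of $\fg$. The task thus reduces to identifying the largest $G$-invariant subspace of $\cH^2(M, e^{-\F})$ on which $\fg_\one$ acts trivially, and then verifying that the $G$-action is indeed unitary there.

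First I would make the $G$-action on $\cH^2(M, e^{-\F})$ explicit. Under the $G$-equivariant isomorphism \eqref{isome} induced by the $G$-invariant section $u$ of Proposition \ref{nonv}, this action corresponds to the action by left translation on $M$ transported to functions; in particular, no contribution comes from $u$ itself, since $\partial_\theta u = 0$ for all $\theta \in \fg_\one$, as recorded inside the proof of Proposition \ref{nonv}. Writing $f = \sum_P f_P\,\zeta_P$ with $f_P \in \cH(M_\zero)$, the embedding $G_\one \hookrightarrow M$ sending $(\xi_1,\dots,\xi_k)$ into the real part of $(\zeta_1,\dots,\zeta_k)$ shows that the infinitesimal action of $\xi_s \in \fg_\one$ on $f$ is, up to sign, $\partial f/\partial \zeta_s$. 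Hence the joint kernel of the $\fg_\one$-action consists exactly of those $f$ that are independent of $\zeta_1,\dots,\zeta_k$; combining this with Berezin integration \eqref{berez} applied to elements with no $\zeta$-dependence, the kernel identifies canonically with $\cH^2(M_\zero, e^{-\F})$ sitting inside $\cH^2(M, e^{-\F})$.

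It then remains to check two inclusions. On the one hand, $\cH^2(M_\zero, e^{-\F})$ is a unitary $G$-subrepresentation of $\cH^2(M, e^{-\F})$: the standard (non-super) geometric quantization for $T_n$ acting on $M_\zero$ yields a unitary $G_\zero$-action with respect to the inherited Hermitian form, while the trivial $\fg_\one$-action automatically lies in $\fu_B$, so the compatibility condition of \eqref{cmpt} is fulfilled. On the other hand, if $W \subseteq \cH^2(M, e^{-\F})$ is any $G$-subrepresentation on which $G$ acts unitarily, Proposition \ref{shee} forces the $\fg_\one$-action on $W$ to vanish, and therefore $W$ lies in the kernel identified above, namely $\cH^2(M_\zero, e^{-\F})$.

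The main obstacle I foresee is making the $\fg_\one$-action under the trivialization \eqref{isome} fully explicit: one must check carefully that $G$-invariance of the section $u$ (in the sense of Proposition \ref{nonv}) ensures that the infinitesimal $\fg_\one$-action on the function side is pure odd translation, with no twist coming from the line-bundle factor. Once this is in place, the identification of the kernel and the unitarity check on $\cH^2(M_\zero, e^{-\F})$ proceed by essentially the same arguments used in the classical torus case, with Berezin integration playing the role of integrating out the (now constant) top-degree odd form.
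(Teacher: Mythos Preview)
Your proposal is correct and follows essentially the same approach as the paper: both arguments hinge on Proposition \ref{shee} to force the $\fg_\one$--action to vanish on any unitary subrepresentation, then identify the joint kernel of this action with $\cH^2\big(M_\zero,e^{-\F}\big)$ (the functions independent of the odd variables), and finally verify that this subspace is itself a unitary $G$--representation since $G_\zero$ acts unitarily and $\fg_\one$ acts trivially. Your extra care about whether the trivialization by the invariant section $u$ introduces a twist in the $\fg_\one$--action is well-placed but ultimately unnecessary here, since the proposition is stated directly for $\cH^2\big(M,e^{-\F}\big)$ rather than $\cH^2(\bl)$, and the paper simply invokes the left-invariant vector field $\cD_\xi$ acting on functions.
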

\begin{proof}
Let $  \xi \in \fg_1  $,  and let $ \frac{\pa}{\pa \xi}  $  denote the left invariant vector field on  $ M $
 associated with $\xi$, here realized as a derivation of functions on  $ M  $.
Suppose that  $ V $  is a  $ G $-subrepresentation of  $  \cH^2(M , e^{-\F} ) $,  and also that
$ \, V \not\subseteq \cH^2(M_\zero,e^{-\F})  $. Then there exists  $ \, f \in V \, $  which is dependent of the odd variables, namely  $\frac{\pa}{\pa \xi} f \neq 0 $  for some  $  \xi \in \fg_1  $.
If the $G$-action on $V$ is unitary, then, by Proposition \ref{shee},  $ \fg_1 $  acts trivially,
which contradicts $ \frac{\pa}{\pa \xi} f \neq 0 $.
We conclude that every subrepresentation of $ \cH^2(M,e^{-\F}) $
 with unitary $G$-action is contained in  $ \cH^2(M_\zero,e^{-\F}) $.

   On the other hand, $\cH^2(M_\zero,e^{-\F})$ is independent of odd variables, so  $\frac{\pa}{\pa \xi} $  acts trivially on it, for all  $ \xi \in \fg_\one  $.
Also, $\cH^2(M_\zero,e^{-\F})$ is clearly a unitary $G_\zero$-representation,
and so it is a unitary $G$-representation  too.
Therefore,  $ \cH^2(M_\zero,e^{-\F}) $  is the largest $ G $--subrepresentation  of  $ \cH^2(M,e^{-\F}) $  in which the  $ G $-action  is unitary.
\end{proof}

\vskip9pt

Let $dr$ be the product of point mass of $\bz^n$ and Lebesgue measure of $\br^m$.
Each element of $\br^m$ has zero measure,
so in the decomposition of unitary $G$-representation into $\wh{G}$-components,
we replace the direct sum by direct integral \cite{ma}.
In the following definition, $\int$ denotes the combination of summation on $\bz^n$ and integration on $\br^m$.

\vskip9pt

\begin{definition}
{\rm Let $\cH$ be a unitary super representation of $G\,$.
We say that  $V_\la^\ep \in \wh{G} \cong (\bz^n \times \br^m) \times \bz_2$  occurs in $\cH$
if there exists  $ f \in \cH $  with parity $\ep$ of the form
\beq
  f(z)  \,\; = \;\,  \int_{\bz^n \times \br^m} \! h(r) \, e^{i r z} \, dr \;,  \label{peac}
\eeq
where  $ \, h(r) \neq 0 \, $  for all $r$ sufficiently near $\la\,$.
If $h(r)$ is almost unique for all $r$ near $\la\,$,
we say that $(\la, \ep)$ occurs with multiplicity one.}  \label{weakocc}
\end{definition}

\vskip9pt

The phrase ``almost unique'' in Definition \ref{weakocc} means that
if we replace $h$ by $h_1$ in (\ref{peac}),
then there is a neighborhood
$U$ of $\la$ such that
$ h(r) = h_1(r)  $  for almost all  $ r \in U $   --- i.e. for all those $r$ outside some subset of measure zero.

\vskip9pt

\begin{example}
{\rm The Fourier transform
expresses every  $  f \in L^2(\br)  $  almost uniquely
as  $  f(z) = \int_\br h(r) \, e^{irz} \, dr  $   --- see for instance \cite[\S7]{ru};
hence every member of $\wh{\br}$ occurs with multiplicity one in $L^2(\br)$.}
\end{example}

\sk

\noindent {\it Proof of Theorem \ref{occ2}:}

Let  $\om$  be a Hamiltonian $G$-invariant super K\"aher form on $M$,
as characterized by Theorem \ref{kah2}. See (\ref{bbzy}).
There exists a line bundle $\bl_\zero$ on $M_\zero$ which corresponds to
$\om_\zero\,$,  cf.\ \cite{ko}. It extends to a super line bundle $\bl$ on $M$,
whose holomorphic sections $\ch(\bl)$ consist of  $ \, \sum_P s_P \, \zeta_P \, $,
where $s_P$ are holomorphic sections of $\bl_\zero $.
 By  Propositions \ref{suhe},  \eqref{isome} and Proposition \ref{maxu},
 $ \cH^2(\bl) \cong \cH^2(M,e^{-\F}) $  are super Hilbert spaces, and
\beq
  \cH^2(\bl_\zero)   \cong   \cH^2(M_\zero,e^{-\F})
  \label{bju}
\eeq
are their respective largest subrepresentations  in which the  $ G $-actions are unitary.

   The elements of  $ \ch^2(M_\zero , e^{-2F} ) $ are independent of odd variables,
   so  $\fg_\one$  acts trivially on it.  We consider the irreducible $G_\zero$-representations
which occur in its direct integral decomposition,
in the sense of Definition \ref{weakocc}.
By \cite[Thm.1.2]{pems},  $ \, V_\la^+ \in \wh{G_\zero} \, $  occurs in  $ \ch^2( M_\zero , e^{-2F}) $
if and only if  $ \, \la \in \im(F') \, $
(\cite[(1.4)]{pems} differs from us by a factor of 2).
By Theorem \ref{kah2}, this is equivalent to  $ \la \in {\im(\Phi)}_\zero $.
Finally, as the elements of  $ \ch^2( M_\zero , e^{-2F}) $  are independent of odd variables,
it does not contain any  $ V_\la^-  $.  This concludes the proof of  Theorem \ref{occ2}.   \hfill  $\Box$

\vskip11pt

For $G_\zero = T_n\,$, $\ch^2(\bl_\zero)$ can be 0 or 1-dimensional,
because  $  {\im(\Phi)}_\zero \cap \bz^n  $  can be $\emptyset$ or $\{\mu\}$.
On the contrary, for  $ G_\zero = \br^m $,  $\ch^2(\bl_\zero) = \int_{{\im(\Phi)}_\zero} V_\la^+ $
cannot be 0 or irreducible because for a strictly convex function $F$,
the image of $F'$ (and hence ${\im(\Phi)}_\zero$) is an open set of $\br^m$.

\vskip9pt

   We extend Gelfand's notion of model of a Lie group \cite{gz},  by saying that a model  of the 
% connected Abelian 
 Lie supergroup  $ G $  is a unitary  $ G $-representation on a super Hilbert space in which every member of  $ \wh{G} $  occurs with multiplicity one.  To construct such a model,  we need a strictly convex function whose gradient mapping is surjective.

\vskip13pt

\noindent {\it Proof of Corollary \ref{mode}:}

Let  $ G = T_n \times \br^m \times {\textstyle \bigwedge}_k^\br $.  Let  $  F \in C^\infty(\br^{n+m})  $
 be a strictly convex function whose gradient mapping  $ F' $  is surjective,
 for instance  $ F(x) = \sum_1^{n+m} x_i^2 $.
 By Theorems \ref{kah2} and \ref{occ2},
we have  $\cH^2(\bl_\zero) \cong  \int_{\bz^n \times \br^m} V_\la^+  $.

   Recall that we have the involutive endofunctor  $ \Pi $  which switches parity
    (see  \eqref{switch}).  So every member of  $ \wh{G} $ occurs exactly once in
  $$  \cH^2(\bl_\zero) \oplus \Pi \, \cH^2(\bl_\zero)  \cong  \int_{\bz^n \times \br^m} V_\la^+ \oplus V_\la^-  $$
 and therefore this is a model of  $ G \, $.  \hfill  $ \Box $

% \newpage
\bigskip
   \bigskip

%%%%%%%%%%%%%%%%%%%%%%%%%%%%%%%%%%%%%%%%%%%%%%%%%%%%%%%%%%%%%%%%%%
%%%%%%%%%%%%%%%%%%%%%%%%%%%%%%%%%%%%%%%%%%%%%%%%%%%%%%%
%%%%%%%%%%%%%%%%%%%%%%%%%%%%%%%%

\section{Beyond irreducibility and unitarity}
\setcounter{equation}{0}
\label{nono}

 By Theorem \ref{occ2}, $ \, \cH^2(\bl_\zero) \, $ is the largest $ G $--subrepresentation of
 $ \cH^2(\bl) \, $  in which the  $ G $-action is unitary, and it decomposes into irreducible subrepresentations
 indexed by the image of the moment map.
 We now address the problem of whether $ \, \cH(\bl) \, $ contains any subrepresentation
 beyond $ \, \cH(\bl_\zero) \, $ which is irreducible, or
is unitarizable with respect to any super metric.
 In view of the trivialization $ \, \cH(\bl) \, \cong \, \cH(M) \, $
 provided by the invariant section of $\bl$,
 we may conduct our discussion on $ \, \cH(M) \, $.

Let us consider the factorization
\beq
\label{eq: tens-split H^2}
  \cH ( M )  \, = \,  \cH ( M_\zero ) \otimes \Lambda_\C^k \;\;,\;\;
 \Lambda_\C^k =  \Lambda_\C(\xi_1\,,\dots,\xi_k)  . \eeq
 Since $ G $  is connected, the $ G $-action  on $ \cH ( M ) $
  is uniquely determined by the $\fg $-action,  which is by super derivations.
   The elements in  $ \cH( M_\zero) $  depend only on even variables, hence are
    annihilated by the super derivations of elements of $\go$.
    In other words,  $ \go $  acts trivially on  $ \cH( M_\zero) $,  and so does  $ G_{\bar{1}} $.
    Similarly, the elements in  $ \Lambda_\C^k $  depend only on odd variables, so they are
     annihilated by the elements of  $ \gz $.
     Thus  $ \gz $  acts trivially on  $\Lambda_\C^k $,  and so does  $ G_{\bar{0}} $.
  This means that, by  \eqref{eq: tens-split H^2},
  $ \cH( M )  $  arises from tensoring the  $ G_{\bar{0}} $-module
   $ \cH( M_\zero) $ and the
    $ G_{\bar{1}}$-module  $ \Lambda_\C^k$.

   The group product in  $ M $  induces a generalized coproduct
  $$  \Delta : C^\infty(M) \relbar\joinrel\lra C^\infty(M \times M) \;\; ,  \quad  f \mapsto \Delta(f) \;\;
  ( (m_1  , m_2) \mapsto f(m_1 \cdot m_2) )   $$
 in the sense of generalized coalgebra theory: indeed, this makes sense because there exists a canonical identification
%%%%%%%%%
  $$  C^\infty(M_{\bar{0}} \times M_{\bar{0}})  \; \cong \;  C^\infty(M_{\bar{0}}) \;
  \widehat{\otimes} \; C^\infty(M_{\bar{0}}) $$
 where the right-hand side is a suitable topological completion of the algebraic tensor product
 $  C^\infty(M_{\bar{0}}) \otimes C^\infty(M_{\bar{0}})  $.
In addition, there exists a dense subalgebra  $ C' \subset C^\infty(M_\zero) $
%%%%%%%%%
%%%%%%%%%
 such that  $ \Delta(C') \subset C' \otimes C' $. Therefore  $ \Delta $  is uniquely determined by its restriction to  $ C' $,
 where for any  $ f \in C' $,
\beq   \label{eq: Delta(f)}
   \Delta(f)   =  {\textstyle \sum\limits_{i=1}^k} \, f'_i \otimes f''_i   \qquad  \text{for suitable \ }  f'_i , f''_i \in C' .
\eeq
See \cite[p.161]{bcc} and references therein.

   Let  $ \xi \in \go $,  and let $ m_{C^\infty(M)} $  be the multiplication in  $ C^\infty(M) $.  The left invariant vector field
   $\cD_\xi$
   is given by  $ \cD_\xi := \, m_{{}_{C^\infty(M)}} \circ ( \id_{{}_{C^\infty(M)}}  \otimes \xi ) \circ \Delta $.
   By  \eqref{eq: Delta(f)},  this means
\[
%\label{eq: Dxi(f)}
   \cD_\xi   =   {\textstyle \sum\limits_{i=1}^k} \, f'_i \cdot \big(\xi.f''_i\big)   \qquad \qquad  \forall \;\; f \in C'
\]
 where  $ \, \xi.f''_i \, $  denotes the scalar obtained by applying $ \xi $ to the germ of function of  $ f''_i $  at  $ e $.
 Since $\cD_\xi$  is a superderivation, its action on any $f$
 is determined by the generators $\xi_1,...,\xi_k$; moreover it suffices to consider $\xi$
 ranging in the $\br$-basis $\{\pa_{\xi_i} := \pa /\pa \xi_i \;;\; i=1,...,k\}$ of $\go$.
 Taking into account that  $ \Delta(\xi_j) = \xi_j \otimes 1 + 1 \otimes \xi_j  $,  we have
\begin{equation}  \label{eq: D.xii(xij)}
  \cD_{\partial_{\xi_i}}\xi_j  \, = \,  \xi_j \cdot \partial_{\xi_i}1 + 1 \cdot \partial_{\xi_i}\xi_j  \, = \,  \delta_{ij}   \qquad \qquad  \forall \;\; i, j = 1 , \dots , k \, .
\end{equation}

   Now recall that  $  \Lambda_\C^k $  is an  $ \N $--graded  algebra, with  $  \big|\xi_j\big| := 1  $  for all  $ j $.  Consider the associated filtration
\begin{equation*}  \label{eq: Lambda-filtration}
   \Lambda_\C^k  \; = \;  \Lambda_\C^{\leq k}  \;\supseteq\;  \Lambda_\C^{\leq k-1}  \;\supseteq\;  \cdots \;\supseteq\;  \Lambda_\C^{\leq 1}  \;\supseteq\;  \Lambda_\C^{\leq 0}  \, = \, \C \cdot 1
\end{equation*}
 where  $ \, \Lambda_\C^{\leq s} := \{ f \in \Lambda_\C^k \,\big|\, |f| \leq s \} \, $  for all  $  s = 0 , 1 , \dots , k \, $.  Then  \eqref{eq: D.xii(xij)}  tells us that
%%%%%
 $  \cD_\xi \big( \Lambda_\C^{\leq s} \big) \,\subseteq\, \Lambda_\C^{\leq s-1} $
%%%%%
 for all  $ s $  and for all  $ \xi \in \go $,  hence in short
%%%%%
\begin{equation}  \label{eq: G- push filtration}
   \go \,.\,\Lambda_\C^{\leq s} \,\subseteq\, \Lambda_\C^{\leq s-1}   \qquad \qquad  \forall \;\; s = 0, 1, \dots, k \, .
\end{equation}
%%%%%

\vskip5pt

  As a category, the Lie supergroups $G$ are equivalent to the super Harish-Chandra pairs
    $(G_\zero, \fg)$; see for instance  \cite{Ga1}, \cite{Ga2}  and references therein. In particular, any superspace  is a  $ G $-module  if and only if it is a  $ (G_\zero,\fg) $-module,  the action of  $ G $  being uniquely determined by that of  $ (G_\zero,\fg) \, $   ---  cf.\ \cite[\S 8.3]{ccf}  for details.

   In the present case, the super Harish-Chandra pair corresponding to the Lie supergroup  $ G_{\bar{1}} $  is  $ ( \{1\} , \go)  $.  Moreover, the action of $\go$ on  $ \Lambda_\C^k $  has been described above. Thus we do know  $ \Lambda_\C^k $  as a  $ G_{\bar{1}} $-representation  space.  In addition, since  $  G = G_{\bar{0}} \times G_{\bar{1}} $  is a direct product, and  $ G_{\bar{0}} $  acts trivially on  $ \Lambda_\C^k $,  the  $ G $-action  on the latter is also known.  We now fix some details about it.
Recall that a representation is completely reducible if it is the direct sum
of irreducible subrepresentations.

\vskip9pt

\begin{proposition}  \label{prop: Lambda-NO-semisimple}
As a $G$ or $ G_{\bar{1}}$-representation,
$ \Lambda_\C^k $ is not completely reducible, so it is not unitary.
Its only irreducible subrepresentation is  $ \C\,1_{\Lambda_\C^k} $.
\end{proposition}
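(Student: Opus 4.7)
The plan is to determine the unique nonzero irreducible subrepresentation of $\Lambda_\C^k$, from which both failure of complete reducibility and failure of unitarity follow cleanly. Since $G = G_+ \times G_-$ with $G_+$ acting trivially on $\Lambda_\C^k$ (by the discussion preceding the proposition), the $G$-subrepresentations of $\Lambda_\C^k$ coincide with its $G_-$-subrepresentations, which in turn coincide with the $\bz_2$-graded subspaces stable under the superderivations $\cD_{\partial_{\xi_i}}$ given by \eqref{eq: D.xii(xij)}. I will therefore work throughout with the $\fg_-$-action.

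The first step is to prove that every nonzero $\fg_-$-subrepresentation $W$ of $\Lambda_\C^k$ contains the constants $\C \cdot 1_{\Lambda_\C^k}$. Let $s$ be the smallest integer with $W \subseteq \Lambda_\C^{\leq s}$ and pick $w \in W$ whose degree-$s$ component $w_s = \sum_{|I|=s} c_I\, \xi_I$ is nonzero, with some $c_{I_0} \neq 0$. Fixing any $i \in I_0$, I would compute $\cD_{\partial_{\xi_i}} w$ using \eqref{eq: G- push filtration}: the result lies in $W \cap \Lambda_\C^{\leq s-1}$, and its degree-$(s-1)$ part equals $\sum_{I \ni i,\, |I|=s} \pm c_I\, \xi_{I \setminus \{i\}}$. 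The map $I \mapsto I \setminus \{i\}$ is injective on subsets $I$ containing $i$, hence no cancellation among these distinct basis monomials can occur and this degree-$(s-1)$ part is nonzero, in particular containing $\pm c_{I_0}\, \xi_{I_0 \setminus \{i\}}$. Iterating this descent $s$ times produces a nonzero element of $W \cap \Lambda_\C^{\leq 0} = W \cap \C \cdot 1_{\Lambda_\C^k}$, so $\C \cdot 1_{\Lambda_\C^k} \subseteq W$. If in addition $W$ is irreducible then $W = \C \cdot 1_{\Lambda_\C^k}$, establishing uniqueness of the irreducible subrepresentation.

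Non-complete-reducibility now follows immediately: given any hypothetical decomposition $\Lambda_\C^k = \bigoplus_\alpha W_\alpha$ into irreducible subrepresentations, every summand $W_\alpha$ would coincide with the one-dimensional subspace $\C \cdot 1_{\Lambda_\C^k}$, forcing $\dim_\C \Lambda_\C^k = 1$ and contradicting $k \geq 1$. For non-unitarity, I would invoke Lemma \ref{lemma: orthog-unitary}: any unitary super Hermitian form on the finite-dimensional $\Lambda_\C^k$ would allow us to split off orthogonal complements of proper subrepresentations, and the induction of Proposition \ref{prop: unitary => semisimple} would yield complete reducibility, which has just been excluded.

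The main obstacle is the descent step: one must be careful that $\cD_{\partial_{\xi_i}}$ really lowers the filtration degree of a top-degree element without annihilating its top-degree image. This hinges on the no-cancellation observation above, namely the injectivity of $I \mapsto I \setminus \{i\}$ on subsets containing $i$ together with the linear independence of the Grassmann monomials $\xi_J$. Everything else is direct bookkeeping using the super-Leibniz rule together with \eqref{eq: D.xii(xij)} and \eqref{eq: G- push filtration}.
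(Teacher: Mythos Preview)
Your proof is correct and shares the paper's core observation---that every nonzero $\fg_-$-subrepresentation of $\Lambda_\C^k$ contains $\C\cdot 1_{\Lambda_\C^k}$---but organizes the argument differently. The paper first establishes non-complete-reducibility by appealing to Theorem~\ref{dual2} (irreducible $G$-representations are one-dimensional, hence carry trivial $\fg_-$-action, contradicting the nontrivial action recorded in \eqref{eq: G- push filtration}), and only afterward asserts uniqueness of the irreducible subrepresentation, citing \eqref{eq: D.xii(xij)} without spelling out the descent you provide. Your route reverses this: you prove uniqueness of the irreducible subrepresentation first via an explicit filtration descent, then deduce non-complete-reducibility by dimension counting. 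This is more self-contained, since it avoids invoking Theorem~\ref{dual2} (whose statement is about \emph{unitary} irreducibles, so its application to arbitrary irreducible summands is left somewhat implicit in the paper). Both arguments finish non-unitarity via Proposition~\ref{prop: unitary => semisimple}.
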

\begin{proof}
We first treat $ \Lambda_\C^k$ as a $ G_{\bar{1}}$-representation.
 Assume there is an isomorphism  $  \Lambda_\C^k \cong \oplus_{i \in I} V_i $  for some family  $ \{ V_i \}_{i \in I} $  of irreducible modules.  By  Theorem \ref{dual2} and its proof, each  $ V_i $  is 1--dimensional.
 But the action of $\go$ switches the parity in  $ V_i  $,  hence such an action is necessarily trivial.
 Likewise, $ G_{\bar{1}}$  acts trivially on each  $ V_i  $.  But this contradicts  \eqref{eq: G- push filtration}. Hence $ \Lambda_\C^k $ is not completely reducible.

   Proposition \ref{prop: unitary => semisimple} says that unitarity implies complete reducibility.  As we have shown that $ \Lambda_\C^k $ is not completely reducible, we conclude that it is not unitary either.

   The non-unitarity of $ \Lambda_\C^k $  also follows from Proposition \ref{shee}:
if $ \Lambda_\C^k $  were unitary, then  Proposition \ref{shee}  implies that $\fg_\one$ acts trivially on it,
which contradicts  \eqref{eq: D.xii(xij)}.

   Finally,  \eqref{eq: D.xii(xij)}  implies that any non-zero subrepresentation of  $ \Lambda_\C^k $  necessarily contains  $  \C\,1_{\Lambda_\C^k}  $;  \,then the latter is the unique irreducible subrepresentation of  $ \Lambda_\C^k $.

                                           The same result is true when we treat    $ \Lambda_\C^k $
                                           as a $G$-representation,
   because  $ G =  G_{\bar{0}} \times  G_{\bar{1}} $ is a direct product of Lie supergroups, and $ G_{\bar{0}}$
   acts trivially on  $ \Lambda_\C^k  $.
\end{proof}

\vskip9pt

\begin{proposition}  \label{finalee}
As a $G$ or $ G_{\bar{1}}$-representation,
 the only unitarizable subrepresentation  of  $  \Lambda_\C^k  $  is  $ \C\,1_{\Lambda_\C^k}  $.
\end{proposition}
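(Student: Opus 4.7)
The plan is to combine Proposition \ref{shee}, which forces $\fg_\one$ to act trivially on any unitary representation, with the explicit description of the $\fg_-$-action given in \eqref{eq: D.xii(xij)} to pin down the possible unitarizable subrepresentations. Since $ \, G = G_+ \times G_- \, $ and $G_+$ acts trivially on $\Lambda_\C^k$, it suffices to handle the $G_-$-case; the $G$-case follows immediately.

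First, let $W \subseteq \Lambda_\C^k$ be a subrepresentation that is unitarizable, i.e.\ admits some super Hermitian metric $B$ with respect to which the $G_-$--action is unitary in the sense of \eqref{cmpt}. Applying Proposition \ref{shee} to $W$ (which uses only that $\fg_- = \fg_\one$ is abelian, as is the case here), we conclude that every element $\xi \in \fg_-$ acts as zero on $W \, $. In particular, the super derivations $\cD_{\partial_{\xi_i}}$ for $i = 1, \dots, k$ annihilate every element of $W$.

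Next I would translate this into a concrete constraint on elements of $\Lambda_\C^k$. Write an arbitrary element as $ \, w = \sum_P c_P \, \xi_P \, $ with $P$ ranging over ordered subsets of $\{1,\dots,k\}$. Since $\cD_{\partial_{\xi_i}}$ acts on $\Lambda_\C^k$ as the odd derivation $\partial/\partial \xi_i$ (because \eqref{eq: D.xii(xij)} pins it down on generators and the super-Leibniz rule extends it uniquely), the condition $\cD_{\partial_{\xi_i}} w = 0$ for all $i$ forces every partial derivative $\partial w / \partial \xi_i$ to vanish. A straightforward induction on the filtration degree introduced before \eqref{eq: G- push filtration} then shows that $w$ must be a scalar multiple of $1_{\Lambda_\C^k}$. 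Hence $W \subseteq \C \cdot 1_{\Lambda_\C^k} \, $.

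Finally, $\C \cdot 1_{\Lambda_\C^k}$ is itself a unitarizable subrepresentation (being a 1-dimensional trivial $G_-$-module of even parity, the standard inner product makes it super Hilbert), so the bound is attained. This establishes that $\C \cdot 1_{\Lambda_\C^k}$ is the unique unitarizable subrepresentation. There is no real obstacle: the only subtlety is being careful that unitarizability, as defined, does force the $\fg_\one$--action to vanish rather than merely the $G_\zero$--action, but this is exactly the content of Proposition \ref{shee}.
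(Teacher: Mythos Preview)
Your proof is correct and arguably more direct than the paper's. The paper argues via complete reducibility: it invokes Proposition~\ref{prop: unitary => semisimple} (unitary $\Rightarrow$ completely reducible) together with Proposition~\ref{prop: Lambda-NO-semisimple} (the only irreducible subrepresentation of $\Lambda_\C^k$ is $\C\,1$), so any unitarizable $W_-$ decomposes as a sum of irreducibles inside $\Lambda_\C^k$ and is therefore forced to equal $\C\,1$. You instead go straight to Proposition~\ref{shee}, conclude that $\fg_-$ annihilates $W$, and then read off $W\subseteq\C\,1$ from the explicit derivation formulas \eqref{eq: D.xii(xij)}. This bypasses the complete-reducibility step entirely; in fact the paper itself mentions exactly this shortcut as an alternative argument inside the proof of Proposition~\ref{prop: Lambda-NO-semisimple}, so your route is one the authors already had in hand but chose not to foreground for Proposition~\ref{finalee}. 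Both approaches are short; yours is marginally more self-contained since it does not rely on the general semisimplicity statement.
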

\begin{proof}
We first treat $ \Lambda_\C^k$ as a $ G_{\bar{1}}$-representation.
Suppose that $W$ is a non-trivial $ G_{\bar{1}}$-subrepresentation of $  \Lambda_\C^k $.
We apply the notion of $ \mathfrak{u}_B(W) $, introduced in
(\ref{eqx}).
By  Proposition \ref{prop: unitary => semisimple}  and  Proposition \ref{prop: Lambda-NO-semisimple},
there exists no super metric  $ B $  on  $ W $  such that
the  $ \go$-action  on  $ W $  factors through  $ \mathfrak{u}_B(W)  $,
i.e. no such  $ B $  is fixed by the action of $ G_{\bar{1}}$.
The same result is true when we treat    $ \Lambda_\C^k $
                                           as a $G$-representation,
   because the action of $ G_{\bar{0}}$  on  $  \Lambda_\C^k  $  is trivial.
\end{proof}

\sk

\noindent
 \textit{Proof of Theorem \ref{thm: L-NO-unitary}:}

By  \eqref{isome}  we have  $  \cH(\bl) \cong \cH(M)  $,  as well as  $ \cH(\bl_\zero) \cong \cH(M_\zero)$:  therefore, it is enough to prove the claim with  $ M $  replacing  $ \bl $.

   Let  $ W $  be any irreducible  $ \fg $--subrepresentation  of  $  \cH( M ) = \cH(M_\zero) \otimes \Lambda_\C^k $.  By the splitting  $ \fg = \gz \oplus \go $  (cf. \S \ref{Abel-conn-sgroups}),   $ W $  has the form  $ W = W' \otimes W'' $,  with irreducible  subrepresentations  $ W' \subset \cH(M_\zero) $  and  $ W'' \subset \Lambda_\C^k $.  By
   Proposition \ref{prop: Lambda-NO-semisimple},  $  W'' = \C\,1_{\Lambda_\C^k} $,
   so  $  W' \otimes W'' \subset \cH(M_\zero) \otimes \C\,1_{\Lambda_\C^k} = \cH(M_\zero)  $.

   Similarly, if $ W = W' \otimes W''$ is a unitarizable $\fg$--subrepresentation of
   $ \cH( M ) $, then by Proposition \ref{finalee},
   we have $W'' = \C\,1_{\Lambda_\C^k}  $ and hence $W \subset \cH(M_\zero) \otimes \C\,1_{\Lambda_\C^k} = \cH(M_\zero) $.
   This completes the proof of Theorem \ref{thm: L-NO-unitary}.
    \hfill  $ \square $

\bigskip
   \bigskip

\noindent {\bf Data Availability.}

No data sets were generated or analysed
during the current study.

\bigskip

\noindent {\bf Declarations.}

On behalf of all authors, the corresponding author states that there is no conflict of interest.

% \newpage
\bigskip
   \bigskip

 %%%%%%%%%%%%%%%%%%%%%%%%%%%%%%%%%%%%%%%%%%%%%%%%%%%%%%%%%%%%%
%%%%%%%%%%%%%%%%%%%%%%%%%%%%%%%%%%%%%%%%%%%%%%%%%%%%%%%%%%%%%
%%  REFERENCES
%%%%%%%%%%%%%%%%%%%%%%%%%%%%%%%%%%%%%%%%%%%%%%%%%%%%%%%%%%%%%
 %%%%%%%%%%%%%%%%%%%%%%%%%%%%%%%%%%%%%%%%%%%%%%%%%%%%%%%%%%%%%


\begin{thebibliography}{999}

\bibitem{bcc}
  L.\ Balduzzi, C.\ Carmeli, G.\ Cassinelli,  \textit{Super  $ G $--spaces},  in.: D.\ Babbit, V.\ Chari, R.\ Fioresi (eds.),  \textsl{Symmetry in Mathematics and Physics},  Contemp.\ Math.\ \textbf{490}  (2008),  159--176.

\vskip4pt

%
% \bibitem{bcf}
% L. Balduzzi, C. Carmeli, R. Fioresi,  \textit{A comparison of the functors of points of
% supermanifolds},  J.~Algebra Appl.\ \textbf{12} (2013), 1250152.
%
% \vskip5pt
%

\bibitem{ccf}
  C. Carmeli, L. Caston, R. Fioresi,  \textit{Mathematical Foundations of Supersymmetry},
  European Math. Soc. (EMS), Z{\"u}rich, 2011.

\vskip4pt

\bibitem{cw}
S. J. Cheng, W. Wang,
  {\it Dualities and representations of Lie superalgebras},
Grad. Studies in Math., vol. {\bf 144}, Amer. Math. Soc. 2012.

\vskip4pt

\bibitem{jga}
M. K. Chuah,
  {\it K\"ahler structures on complex torus},
J. Geom. Anal. {\bf 10} (2000), 257-267.

\vskip4pt

\bibitem{pems}
M. K. Chuah,
{\it The direct integral of some weighted Bergman spaces},
Proc. Edinburgh Math. Soc. {\bf 50}
(2007), 115-122.

%\vskip4pt

%\bibitem{dm}
 % P. Deligne, J. W. Morgan,  \textit{Notes on supersymmetry (following {J}oseph
% {B}ernstein)}, in:  \textsl{Quantum Fields and Strings: a Course for Mathematicians},
% Vols.~1,~2 (Princeton, NJ, 1996/1997), Amer. Math. Soc., Providence, RI, 1999, 41--97.

\vskip4pt

\bibitem{dw}
  B. DeWitt,  \textit{Supermanifolds},  Cambridge Monographs on Math. Physics, Cambridge University Press, Cambridge, 1984.

\vskip4pt

\bibitem{fg}
  R. Fioresi, F. Gavarini,
  {\it Real forms of complex Lie superalgebras and supergroups},
Commun. Math. Phys.  {\bf 397}  (2023), 937--965.

\vskip4pt

\bibitem{Ga1}
  F. Gavarini,  \textit{Global splittings and super Harish-Chandra pairs for affine supergroups},
Trans. Amer. Math. Soc.  \textbf{368}  (2016), no.\ 6, 3973--4026.

\vskip4pt

\bibitem{Ga2}
  F. Gavarini,  \textit{Lie supergroups vs. super Harish-Chandra pairs: a new equivalence},
Pacific J. Math.  \textbf{306}  (2020), no. 2, 451--485.

\vskip4pt

\bibitem{gz}
I. M. Gelfand, A. Zelevinski,
{\it Models of representations of classical groups and their
hidden symmetries},
Funct. Anal. Appl. {\bf 18} (1984), 183-198.

\vskip4pt

\bibitem{gs}
V. Guillemin, S. Sternberg,
  {\it Symplectic techniques in physics},
Cambridge Univ. Press, 1984.

\vskip4pt

\bibitem{ko}
B. Kostant,
  {\it Quantization and unitary representations},
Lecture Notes in Math. {\bf 170}, pp.87-208,
Springer-Verlag, New York/Berlin 1970.

\vskip4pt

\bibitem{ma}
G. Mackey,
{\it The theory of unitary group representations},
Univ. Chicago Press 1976.

\vskip4pt

\bibitem{ns}
K.-H. Neeb and H. Salmasian,
{\it Lie supergroups, unitary representations, and invariant cones},
in ``Supersymmetry in Mathematics and Physics'',
Lecture Notes in Math. {\bf 2027}, pp.195-239,
Springer 2011.


\vskip4pt

\bibitem{ro}
  A. Rogers,  \textit{Supermanifolds. Theory and applications},
  World Scientific Publishing Co. Pte. Ltd., Hackensack, NJ, 2007.

\vskip4pt

\bibitem{ru}
W. Rudin,
{\it Functional analysis}, McGraw-Hill, Columbus OH 1973.

\vskip4pt

\bibitem{sa}
H. Salmasian, {\it Unitary representations of nilpotent super Lie groups},
Comm. Math. Phys. {\bf 297} (2010), 189-227.

%
%\vskip4pt
%
% G. M. Tuynman,  \textit{Supermanifolds and supergroups. Basic theory},
% Math. and its Applications  \textbf{570},  Kluwer Academic Publishers, Dordrecht, 2004.
%
% \vskip4pt
%
% G. M. Tuynman,
%  {\it Super symplectic geometry and prequantization},
% J. Geom. Phys. {\bf 60} (2010), 1919-1939.

\vskip4pt

\bibitem{va}
V. S. Varadarajan,
  {\it Supersymmetry for mathematicians: an introduction},
Courant Lecture Notes {\bf 11},
Amer. Math. Soc. 2004.

\vskip4pt

\bibitem{vars}
S. Varsaie, {\it K\"ahlerian supermanifolds},
J. Math. Phys. {\bf 40} (1999), no. 12, 6701-6706.

%\vskip4pt

%\bibitem{Ya1}
 % Y.\ F.\ Yao, {\it Non-restricted representations of simple Lie superalgebras of special type and Hamiltonian type},
  % Sci.\ China Math.  {\bf 56}  (2013), no.\ 2, 239--252.

%\vskip4pt

%\bibitem{Ya2}
 %Y.\ F.\ Yao, {\it The abelian subalgebras of maximal dimensions for general linear Lie superalgebras},
  %Linear and Multilinear Algebra  {\bf 64}  (2016), no.\ 10, 2081--2089.

\end{thebibliography}
\end{document}